\documentclass[11pt]{article}
\usepackage[top=1in, bottom=1in, left=1in, right=1in]{geometry}
\usepackage{graphicx}
\usepackage{multicol}
\usepackage{times}
\usepackage{setspace} 
\usepackage{here}
\usepackage{amsfonts, amssymb}
\usepackage{mathrsfs}
\usepackage{amsthm}
\usepackage{amsmath}
\usepackage{bbm}
\numberwithin{equation}{section}
\newtheorem{theorem}{Theorem}[section]
\newtheorem{proposition}{Proposition}[section]
\newtheorem{lemma}{Lemma}[section]
\theoremstyle{definition}
\newtheorem{dy}{Definition}[section]
\newtheorem{remark}{Remark}[section]

\usepackage{amsfonts, mathtools}
\usepackage{amsmath,amssymb}
\usepackage{hyperref}
\hypersetup{%
	colorlinks=true,
	linkcolor=black,
	citecolor=black, 
	urlcolor=blue
}
\usepackage{bxtexlogo}

\usepackage{listings}
\usepackage[edges]{forest}
\newdimen\forestdimen

\title{Homeomorphism of the Revuz correspondence under Dynkin class assumptions}
\author{Zijian Xu\thanks{Zijian Xu (xuzijian823@gmail.com). Department of Applied Mathematics, Fukuoka University,
		Fukuoka 814-0180, Japan. Supported in part by fund  from the Central Research Institute of Fukuoka University.}}
\date{}
\begin{document}
	\maketitle
		\begin{abstract}
		This paper investigates the topological properties of the Revuz correspondence between positive continuous additive functionals (PCAFs) and their associated smooth measures. Within the Dynkin, local Dynkin, and Green-tight Dynkin classes, we establish bidirectional equivalences among measure convergence, potential convergence, and PCAF convergence. In the local Dynkin class, weak convergence on compact sets, strong $\mathcal{E}_1$-convergence of potentials, uniform convergence of potentials, and $L^1$-convergence of PCAFs are mutually equivalent; under the Green-tight condition, this equivalence extends to the whole space.
	\end{abstract}
	\section{Introduction}
	In the present article, we focus on the convergence of positive continuous additive functionals (PCAFs) in terms of their associated smooth measures. The one-to-one correspondence between PCAFs and smooth measures, known as the Revuz correspondence, has long been a cornerstone of Dirichlet form theory, yet its topological aspects-namely, how convergence on one side relates to convergence on the other—have remained largely unexplored. This question has recently gained renewed interest due to applications in the construction of Liouville-type stochastic processes, where the limiting smooth measures are typically singular and the corresponding PCAFs admit no explicit representation.\\
	Let $E$ be a locally compact separable metric space and $\mathfrak{m}$ a positive Radon measure on $E$ with full support. We consider an $\mathfrak{m}$-symmetric Hunt process $\mathbb{M}=(X_t,\mathbb{P}_x)$ on $E$ whose associated Dirichlet form $(\mathcal{E},\mathcal{F})$ on $L^2(E;\mathfrak{m})$ is assumed to be regular. The Revuz correspondence then provides a bijection between the class of PCAFs $\mathbf{A}_c^+$ and the class of smooth measures ${\sf S}$. For instance, for a nonnegative bounded Borel function $f$, the PCAF $\displaystyle{\sf A}_t=\int_0^t f(X_s){\rm d}s$ corresponds to the smooth measure $f(x)\mathfrak{m}({\rm d}x)$; for one-dimensional Brownian motion, the Dirac measure $\delta_a$ corresponds to the local time at $a$.\\
	Despite the algebraic clarity of this correspondence, the vastness of both ${\sf S}$ and $\mathbf{A}_c^+$ poses a significant obstacle to establishing a unified topological framework. The class ${\sf S}$ contains not only all Radon measures charging no zero-capacity sets, but also many non-Radon Borel measures; the class $\mathbf{A}_c^+$ is equally broad. A natural strategy is to first investigate well-behaved subclasses and then gradually extend the scope.\\
	Nishimori, Tomisaki, Tsuchida, and Uemura \cite{Nishimori_Tomisaki_Tsuchida_Uemura_2025} initiated this program by focusing on the subclass ${\sf S}_0$ consisting of smooth measures of finite energy integrals. For $\mu\in{\sf S}_0$, the 1-potential $U_1\mu$ belongs to the Dirichlet space $\mathcal{F}$, and they introduced a metric $\rho$ on ${\sf S}_0$ via the $\mathcal{E}_1$-norm of the difference of 1-potentials. They showed that $({\sf S}_0,\rho)$ is a Polish space and proved that if $\rho(\mu_n,\mu)\to 0$, then there exists a subsequence of the corresponding PCAFs that converges $\mathbb{P}_x$-almost surely in the local uniform topology for quasi-every starting point. This provided a positive-direction compactness result, but did not address the converse implication.\\
	Subsequently, Ooi \cite{MR4968118} substantially strengthened this result by establishing the converse direction within ${\sf S}_0$. Through the Fukushima decomposition and the Beurling-Deny decomposition, he derived a key energy identity involving the killing measure $\kappa$ and an energy functional $\nu_0$ describing the part of the process that continuously escapes to the cemetery. Using this identity, he proved that the Revuz map from $({\sf S}_0,\rho)$ to the corresponding PCAF class equipped with the $L^2(\mathbb{P}_{m+\kappa+\nu_0})$-topology with local uniform convergence is in fact a homeomorphism. This remains the first and only complete topological characterization of the Revuz correspondence, but it is confined to ${\sf S}_0$.\\
	Independently, Noda \cite{MR5063570} pursued a different approach for processes satisfying the absolute continuity condition—that is, processes admitting transition densities with respect to $\mathfrak{m}$. His framework restricts the measures to the smaller class ${\sf S}_{00}$, consisting of finite smooth measures with bounded 1-potential. He derived a quantitative estimate bounding the expected supremum distance between two PCAFs by the sup-norm distance between their 1-potentials, and obtained positive-direction convergence results: if the 1-potentials converge uniformly (or locally uniformly, under conservativeness), then the associated PCAFs converge in $L^2(\mathbb{P}_x)$ expectation or in probability, respectively. His results hold for every starting point, but are strictly one-directional and rely crucially on the existence of transition densities.\\
	To extend the analysis beyond ${\sf S}_0$ and ${\sf S}_{00}$ to the full smooth measure class ${\sf S}$, Ooi, Uemura, and Tsuchida \cite{ooi2025smooth} introduced the notion of smooth measures attached to a compact nest $\{F_\ell\}$. Since every smooth measure admits a nest such that its restrictions to $F_\ell$ belong to ${\sf S}_0$, they defined a metric on the projective limit of these restriction classes and studied several modes of convergence—weak convergence on the nest, vague convergence on the nest, and resolvent-sense convergence. They obtained sufficient conditions under which convergence of PCAFs implies convergence of the corresponding measures. This work represents the first systematic attempt to handle general smooth measures, but the results are conditional and require additional assumptions on the tails of the measures with respect to the nest.\\
	The present paper investigates the topological properties of the Revuz correspondence under Dynkin-class assumptions. The Dynkin class ${\sf S}_D$ consists of smooth measures whose $\alpha$-potential is globally bounded; the local Dynkin class ${\sf S}_{L\!D}$ only requires boundedness on each compact set and the Green-tight Dynkin class ${\sf S}_{D_\infty}^\alpha$ further requires uniformly small tail potential outside a sufficiently large compact set. These classes arise naturally in the study of Schr\"odinger operators and Feynman–Kac semigroups, and they are not contained in ${\sf S}_0$ in general.\\
	Our main results establish two-directional equivalences within these classes. Theorem 4.1 shows that if $\mu\in{\sf S}_D$, $\{\mu_n\}$ is uniformly Dynkin, and the corresponding PCAFs converge almost surely locally uniformly, then $R_1\mu_n$ converges pointwise to $R_1\mu$ quasi-everywhere, a converse-direction result without requiring finite energy integrals or transition densities. Theorem 4.2 establishes, under the continuity of the resolvent kernel $R_1(\cdot,\cdot)$ on $E\times E$ and within the local Dynkin class, a full equivalence among weak convergence of measures on compact sets, strong $\mathcal{E}_1$-convergence of potentials, uniform convergence of potentials, and $L^1(\mathbb{P}_x)$ convergence of PCAFs for every starting point. Theorem 4.3 lifts this equivalence to the whole space under the Green-tight condition, yielding a global homeomorphism of the Revuz map on ${\sf S}_{G\!D}^1$. Collectively, these results extend Ooi's homeomorphism from ${\sf S}_0$ to the larger local Dynkin class, supplement Noda's one-directional results by providing the converse implication without assuming transition densities, and upgrade the subsequential convergence of Nishimori et al. to full sequential convergence under appropriate assumptions.\\
	The structure of this article is as follows. Section 2 collects the necessary preliminaries on Dirichlet forms, Hunt processes, smooth measures, PCAFs, and the various Dynkin classes. Section 3 establishes technical lemmas, including a martingale additive functional decomposition for potentials of measures in ${\sf S}_{00}$, equicontinuity properties of resolvent kernels under the continuity assumption, and a key proposition showing that local Dynkin measures restricted to compact sets belong to ${\sf S}_{00}$. Section 4 presents and proves the three main theorems.
	\section{Framework}
	Let $(E;{\sf d})$ be a locally compact separable metric space and  $\mathfrak{m}$ a positive Radon measure on $E$ with full topological support. Let $E_\partial:= E \cup \{\partial\}$ be the one-point compactification of $E$. When $E$ is already compact, $\partial$ is regarded as an isolated point. Let $\mathbb{M}=(\Omega,\mathscr{F},\mathscr{F}_t,X_t,\mathbb{P}_x,\zeta)$ be an $\mathfrak{m}$-symmetric Hunt process on $E$  where $\{\mathscr{F}_t\}$ is the minimum augmented filtration and $\zeta$ is the life time associated with $\mathbb{M}$, i.e. $\zeta(\omega):=\inf\{ t\geq 0 \mid X_t = \partial  \}$.
	\par Let $\{P_t\}_{t\geq0}$ be the transition semigroup of $\mathbb{M}$, i.e. $P_t f(x)=\mathbb{E}_x[f(X_t)]. $ There exists a strongly continuous Markovian semigroup $\{T_t\}_{t\geq 0}$ on $L^2(E;\mathfrak{m})$ associated $\{P_t\}_{t\geq 0}$. We define the Dirichlet form on $L^2(E;\mathfrak{m})$ of $\mathbb{M}$ as
	\[
	\left\{
	\begin{aligned}
	&\mathcal{E}(u,v)=\lim_{t \downarrow 0}\frac{1}{t}(u-T_t u,v),\\
	&\mathcal{F}=\left\{u \in L^2(E;\mathfrak{m}) \ \left|\ \lim_{t \downarrow 0} \frac{1}{t}(u-T_t u,u)<\infty \right.\right\},
	\end{aligned}
	\right.
	\]
	where $(f,g)$ means the $L^2$-inner product of  $f$ and $g$. For $\alpha>0$ set $\mathcal{E}_\alpha(u,v):=\mathcal{E}(u,v)+\alpha(u,v)$ for $u,v\in\mathcal{F}$. Note that $\mathcal{F}$ is Hilbert space with inner product $\mathcal{E}_1$ and $\Vert\cdot \Vert_{\mathcal{E}_1}$ is the norm of $\mathcal{F}$ with respect to $\mathcal{E}_1$, i.e. $\Vert f \Vert_{\mathcal{E}_1}:=\sqrt{\mathcal{E}_1 (f,f)}$ for $f\in\mathcal{F}$. We said that Dirichlet form $(\mathcal{E},\mathcal{F})$ is \textit{regular} if $\mathcal{F} \cap C_0(E)$ is dense in $\mathcal{F}$ with respect to $\mathcal{E}_1$-norm and is also dense in $C_0(E)$ with respect to $\Vert \cdot \Vert_\infty$, where $C_0(E):=\{ u\in C(E)\mid {\rm supp}[u] \text{ is compact} \}$,  $C(E)$ is the space of all real continuous functions on $E$ and $\displaystyle\Vert f \Vert_\infty:=\sup_{x\in E}|f|$. Throughout this paper, we assume that $(\mathcal{E},\mathcal{F})$ is regular Dirichlet form. The transition kernel of $\mathbb{M}$ is denoted by $P_t(x,{\rm d}y)$, $t>0$. The correspodence between $\mathbb{M}$ and $(\mathcal{E},\mathcal{F})$ is given by
	\[
	T_t f(x)=\mathbb{E}_x\left[  f(X_t): t<\zeta  \right]:=\int_E f(y) P_t(x,{\rm d}y)\quad\mathfrak{m}\text{-a.e. }x\in E
	\]
	Here and in the sequel, unless mentioned otherwise, we use the convention that a function defined on $E$ takes the value $0$ at $\partial$.  The process $\mathbb{M}$ is said to have the \textit{absolute continuity condition} with respect to $\mathfrak{m}$ (\textbf{(AC)} in abbreviation) if for any $x \in E$ and $t>0$, $\mathfrak{m}(A)$=0 implies $P_t(x,A)=0$ for all $A \in \mathscr{B}(E)$. Throughout this paper, $\mathbb{M}$ is assumed to have $\mathbf{(AC)}$.
	\par For an open set $O \subset E$, we define the capacity associated with the Dirichlet form $(\mathcal{E},\mathcal{F})$ as
	\[
	{\rm Cap}(O):=\left\{
	\begin{aligned}
		&\inf\{ \mathcal{E}_1(u,u) \mid  u \in \mathcal{L}_O  \},&\quad&\text{if}&\quad&\mathcal{L}_O \neq \emptyset,&\\
		&\infty& &\text{if}& &\mathcal{L}_O = \emptyset,&
	\end{aligned}
	\right.
	\]
	where $\mathcal{L}_O:=\{ u \in \mathcal{F} \mid u \geq 1 \ \mathfrak{m}\text{-a.e. on }O\}$, and for ant set $A \subset E$,
	\[
	{\rm Cap}(A):=\inf\left\{ {\rm Cap}(O) \mid A \subset O,\ O\text{ is open}   \right\}.
	\]
	Then it is known that Cap is a Choquet capacity (see \cite{MR2778606}). A statement depending on $x \in A$ is said to hold q.e. on $A$ if there exists a set $N \subset A$ of zero capacity such that the statement is true for every $x \in A \setminus N$. ``q.e." is an abbreviation of \textit{quasi-everywhere}. Let $u$ be an extended real valued function defined q.e. on $E$. We call $u$ \textit{quasi continuous} if there exists for any $\varepsilon > 0$ an open set $G \subset E$ such that ${\rm Cap}(G) < \varepsilon$ and $u |_{E \setminus G}$ is finite continuous. Here $u |_{E \setminus G}$ denotes the restriction of $u$ on $E \setminus G$. A sequence $\{F_k\}$ of closed sets such that $F_k \uparrow$ and ${\rm Cap}(E\setminus F_k) \downarrow 0$ as $k \to \infty $ is called a \textit{nest} on $E$.
	\par Let us call a (positive) Borel measure $\mu$ on $E$ \textit{smooth} if it satisfies the following conditions:
	\begin{itemize}
		\item[$(S.1)$] $\mu$ charges no set of zero capacity.
		\item[$(S.2)$] There exists an increasing sequence $\{F_n\}$ of closed sets such that
		\begin{equation}\label{2.1}
			\mu(F_n)<\infty,\quad n=1,2,\dots,
		\end{equation}
		\begin{equation}\label{2.2}
			\lim_{n\to\infty}{\rm Cap}(K\setminus F_n)=0 \quad \text{for any compact set }K.
		\end{equation}
	\end{itemize}
	We denote the set of all smooth measures by ${\sf S}$. Let us note that  $\mu$ then satisfies
	\begin{equation}\label{2.3}
		\mu \left( E \setminus \bigcup_{n=1}^\infty F_n  \right)=0
	\end{equation}
	An increasing sequence $\{F_k\}$ of closed sets satisfying condition (\ref{2.2}) will be called a \textit{generalized nest} to distinguish it from the nest introduced in the preceding section. If further each $F_n$ is compact we call it a \textit{generalized compact nest}. A positive Radon measure $\mu$ on $E$ is said to be of \textit{finite energy integrals} if there exists a positive constant $C>0$ such that
	\[
	\int_E \left| v(x) \right| \mu({\rm d}x) \leq C\sqrt{\mathcal{E}_1(v,v)},\quad\text{for all}\quad v\in \mathcal{F}\cap C_0(E).
	\]
	The set of all measures on $E$ which are of finite energy integrals is denoted by ${\sf S}_0$. Then, according to the Riesz representation theorem, we find that $\mu \in {\sf S}_0$ if and only if there exists a unique element $U_\alpha \mu \in \mathcal{F}$, called the $\alpha$-\textit{potential} of $\mu$, for each $\alpha>0$ such that
	\[
	\mathcal{E}_\alpha(U_\alpha \mu,\nu)=\int_E v(x) \mu({\rm d}x), \quad\text{for all}\quad v \in \mathcal{F}\cap C_0(E).
	\]
	A subclass ${\sf S}_{00}$ of ${\sf S}_0$ is defined as
	\[
	{\sf S}_{00}:=\{\mu \in {\sf S}_0 \mid \mu(E)<\infty, \Vert U_1 \mu \Vert_\infty<\infty\}.
	\]
	For any Borel set $A \subset E$, set $1_{A} \mu=:\mu^A$ for $\mu \in {\sf S}$.  Define a function $\rho:{\sf S}_0 \times {\sf S}_0\to[0,+\infty)$ by
	\[
	\rho(\mu,\nu):=\Vert U_1 \mu - U_1\nu \Vert_{\mathcal{E}_1}\quad \text{for}\quad \mu,\nu\in{\sf S}_0.
	\]
	That the  function $\rho$ becomes a metric on ${\sf S}_0$ and the metric space $(\rho,{\sf S}_0)$ is a Polish space (see \cite[Proposition 3.6]{Nishimori_Tomisaki_Tsuchida_Uemura_2025}). If there exists uniquely a positive Radon measure $\mu_{\langle u \rangle}, u\in\mathcal{F}_b$, satisfying
	\[
	\int_E f(x)\ {\rm d}\mu_{\langle u \rangle}=2\mathcal{E}(uf,u)-\mathcal{E}(u^2,f)\quad\text{for}\quad f\in\mathcal{F}\cap C_0(E).
	\]
	$\mu_{\langle u \rangle}$ is finite by the above estimate. We call $\mu_{\langle u \rangle}$ the \textit{energy measure} of $u \in \mathcal{F}_b$. We know that $\mu_{\langle u \rangle}(E)=2\mathcal{E}(u,u)$ for $u \in \mathcal{F}_b$.
	\begin{theorem}[\cite{MR2778606} Theorem 2.2.3]\label{thm2.1}
		The following conditions are equivalent for a Borel set $B \subset E$:
		\begin{itemize}
			\item [{\rm (i)}] ${\rm Cap}(B)=0$.
			\item[{\rm (ii)}] $\mu(B)=0$, $\forall \mu \in {\sf S}_0$.
			\item[{\rm (iii)}] $\mu(B)=0$, $\forall \mu \in {\sf S}_{00}$.
		\end{itemize}
	\end{theorem}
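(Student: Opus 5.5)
The plan is to prove the cycle (i)$\Rightarrow$(ii)$\Rightarrow$(iii)$\Rightarrow$(i). The middle implication is immediate because ${\sf S}_{00}\subset{\sf S}_0$.

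For (i)$\Rightarrow$(ii), take $\mu\in{\sf S}_0$ and a Borel set $B$ with ${\rm Cap}(B)=0$. For each $\varepsilon>0$ choose an open $O\supset B$ with ${\rm Cap}(O)<\varepsilon$, and let $e_O\in\mathcal{F}$ be its $1$-equilibrium potential. Then $e_O\ge 1$ $\mathfrak{m}$-a.e. on $O$ and $\mathcal{E}_1(e_O,e_O)={\rm Cap}(O)$. The finite-energy inequality extends from $\mathcal{F}\cap C_0(E)$ to quasi continuous versions of elements of $\mathcal{F}$, by regularity and the fact that an $\mathcal{E}_1$-convergent sequence has a subsequence converging q.e. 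This gives
\[
\int_E \tilde v\,{\rm d}\mu\le C\|v\|_{\mathcal{E}_1}\qquad\text{for } v\in\mathcal{F}.
\]
Since $\tilde e_O\ge1$ q.e. on the open set $O$, we get
\[
\mu(B)\le\mu(O)\le \int \tilde e_O\,{\rm d}\mu\le C\sqrt{\varepsilon}.
\]
Letting $\varepsilon\to0$ yields $\mu(B)=0$.

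The step that needs care is the extension of the inequality to quasi continuous versions. Its sharpest form is $\int\tilde v\,{\rm d}\mu=\mathcal{E}_1(U_1\mu,v)$, and the passage to the limit uses that $\mu$ charges no sets of zero capacity. That fact is itself part of what is being proved, which looks circular. To avoid the circularity, argue with open sets directly: for open $O$, approximate $1_O$ from below by functions in $C_0(E)$ and use the lower semicontinuity and regularity argument of Fukushima–Oshima–Takeda to show $\mu(O)\le C\sqrt{{\rm Cap}(O)}$.

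For (iii)$\Rightarrow$(i), suppose ${\rm Cap}(B)>0$. By Choquet capacitability there is a compact $K\subset B$ with ${\rm Cap}(K)>0$. Let $\nu_K$ be the $1$-equilibrium measure of $K$. It is supported on $K$, so it is finite, and it satisfies $U_1\nu_K=e_K\le1$ q.e. with $\nu_K(K)\ge\mathcal{E}_1(e_K,e_K)={\rm Cap}(K)>0$. Hence $\nu_K\in{\sf S}_{00}$ and $\nu_K(B)>0$, contradicting (iii).

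The main obstacle is this last step. One must know that the equilibrium measure exists and that its potential is bounded by $1$, which is the standard result that $e_K$ is a $1$-excessive function dominated by $1$ and is the potential of a measure on $K$. I would quote this from \cite{MR2778606}, Lemma 2.2.6 and Theorem 2.2.5.
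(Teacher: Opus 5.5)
Your overall route is the standard one. The paper gives no argument of its own here; it quotes \cite{MR2778606}, and your outline follows the proof there: (i)$\Rightarrow$(ii) via a bound $\mu(O)\le C\,{\rm Cap}(O)^{1/2}$ for open $O$, (ii)$\Rightarrow$(iii) by inclusion, and (iii)$\Rightarrow$(i) via Choquet capacitability and the $1$-equilibrium measure $\nu_K$ of a compact $K\subset B$. The last step is fine as written. You actually only need $\nu_K\neq 0$, which is immediate from $U_1\nu_K=e_K$ and $\mathcal{E}_1(e_K,e_K)={\rm Cap}(K)>0$. If you do use $\int\tilde e_K\,{\rm d}\nu_K=\mathcal{E}_1(U_1\nu_K,e_K)$, this is no longer circular, because (i)$\Rightarrow$(ii) is already established for $\nu_K\in{\sf S}_0$.

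The gap is in (i)$\Rightarrow$(ii). You rightly reject your first argument as circular, but the replacement you sketch does not contain the idea that produces ${\rm Cap}(O)$.

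\begin{itemize}
\item Approximating $1_O$ from below by $v\in\mathcal{F}\cap C_0(E)$ with $0\le v\le 1$ and ${\rm supp}\,v\subset O$ is the right reduction. It rests on regularity and inner regularity of the Radon measure $\mu$.
\item But the finite-energy inequality only gives $\int v\,{\rm d}\mu\le C\Vert v\Vert_{\mathcal{E}_1}$, and such $v$ can have arbitrarily large energy. No lower semicontinuity argument connects this to ${\rm Cap}(O)$.
\end{itemize}

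The missing ingredient is an order comparison made at the level of $\mathfrak{m}$-a.e.\ inequalities, using that $U_1\mu$ is a $1$-potential: $\mathcal{E}_1(U_1\mu,w)\ge 0$ for every $w\in\mathcal{F}$ with $w\ge 0$ $\mathfrak{m}$-a.e. To see this, take $w_n\in\mathcal{F}\cap C_0(E)$ with $w_n\to w$ in $\mathcal{E}_1$. Then $w_n^+$ is $\mathcal{E}_1$-bounded and converges to $w$ in $L^2$, hence $\mathcal{E}_1$-weakly, and $\mathcal{E}_1(U_1\mu,w_n^+)=\int w_n^+\,{\rm d}\mu\ge 0$.

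Since $e_O-v\ge 0$ $\mathfrak{m}$-a.e., this gives
\[
\int_E v\,{\rm d}\mu=\mathcal{E}_1(U_1\mu,v)\le\mathcal{E}_1(U_1\mu,e_O)\le\Vert U_1\mu\Vert_{\mathcal{E}_1}\,{\rm Cap}(O)^{1/2}\le C\,{\rm Cap}(O)^{1/2}.
\]
Taking the supremum over such $v$ yields $\mu(O)\le C\,{\rm Cap}(O)^{1/2}$, with no quasi continuous versions involved. With this step inserted, your proof is complete and coincides with the cited one.
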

	\begin{theorem}[\cite{MR2778606} Theorem 2.2.4]\label{thm2.2}
		The following conditions are equivalent for a positive Borel measure $\mu$ on $E$.\\
		$({\rm i})$ $\mu$ $\in$ ${\sf S}$.\\
		$({\rm ii})$ There exists a generalized nest $\{F_k\}$ satisfying (\ref{2.3}) and $\mu^{F_k} \in {\sf S}_0$ for each $k$.\\
		$({\rm iii})$ There exists a generalized compact nest $\{F_k\}$ satisfying (\ref{2.3}) and $\mu^{F_k} \in {\sf S}_{00}$ for each $k$.
	\end{theorem}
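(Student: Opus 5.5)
The plan is to prove the cycle of implications (i)$\Rightarrow$(iii)$\Rightarrow$(ii)$\Rightarrow$(i). The two implications (iii)$\Rightarrow$(ii) and (ii)$\Rightarrow$(i) are immediate, so all the work lies in (i)$\Rightarrow$(iii).

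\textbf{The easy implications.} Since ${\sf S}_{00}\subset{\sf S}_0$, a generalized compact nest as in (iii) is in particular a generalized nest with $\mu^{F_k}\in{\sf S}_0$, which gives (iii)$\Rightarrow$(ii). For (ii)$\Rightarrow$(i), I check (S.1) and (S.2) directly.
\begin{itemize}
\item For (S.1), take $N$ with ${\rm Cap}(N)=0$, and enlarge it to a Borel set of zero capacity. Theorem \ref{thm2.1} applied to $\mu^{F_k}\in{\sf S}_0$ gives $\mu(N\cap F_k)=0$. Combining this with (\ref{2.3}) yields $\mu(N)=0$.
\item For (S.2), the given generalized nest already satisfies (\ref{2.2}). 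To get (\ref{2.1}) I would use $\mu(F_k)=\mu^{F_k}(E)$. If finiteness of this total mass is not automatic from ${\sf S}_0$, I would intersect $F_k$ with the closed sets $\{U_1\mu^{F_k}\ge 1/j\}$ relative to a quasi-continuous version. There $\mu^{F_k}(\{\tilde U_1\mu^{F_k}\ge 1/j\})\le j\int \tilde U_1\mu^{F_k}\,d\mu^{F_k}=j\|U_1\mu^{F_k}\|^2_{\mathcal E_1}<\infty$. A diagonal choice then keeps the nest property.
\end{itemize}

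\textbf{The main direction (i)$\Rightarrow$(iii).} Let $\{F_n\}$ be the generalized nest from (S.2), so that $\mu(F_n)<\infty$. Fix a strictly positive bounded $\varphi\in L^2(\mathfrak m)$ and set $h=R_1\varphi\in\mathcal F$, taking a quasi-continuous version. Then $h>0$ q.e., and there is a nest $\{G_j\}$ on which $h$ is continuous. Choose an exhaustion $K_\ell\uparrow E$ of $E$ by compact sets and define
\[
F_k:=F_k\cap G_k\cap K_k\cap\{h\ge 1/k\}.
\]
This is compact, and it is increasing after replacing $G_k$ by an increasing modification. I then show two things about $\nu:=\mu^{F_k}$:
\begin{itemize}
\item $\nu$ is finite, because $\nu(E)\le\mu(F_k)<\infty$.
\item $\nu\in{\sf S}_0$ with bounded potential. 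For this I use the standard lemma that for a finite measure $\nu$ charging no polar set and supported on a set where $h\ge 1/k$, the estimate $\int|v|\,d\nu\le C\|v\|_{\mathcal E_1}$ holds. Here $C$ comes from the capacitary estimate $\nu(|\tilde v|>\lambda)\le \nu(E)\cdot{\bf 1}$, which by itself is not enough. So instead I pass through the 1-excessive function $h$ and the measure $\nu$ restricted to a sublevel of a potential. Concretely, I would replace $\nu$ by $1_{\{U_1\nu\le M\}}\nu$ using the Fukushima truncation (\cite{MR2778606}, Lemma 2.2.x style). The maximum principle $U_1(1_B\nu)\le M$ on $E$ whenever it holds on $B$ then gives $\|U_1\|_\infty\le M$.
\end{itemize}

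\textbf{The main obstacle.} The delicate step is verifying that the intersected sequence still satisfies (\ref{2.2}), namely ${\rm Cap}(K\setminus F_k)\to0$ for compact $K$. I would use countable subadditivity of Cap over the pieces $K\setminus F_k$, $E\setminus G_k$, $\{h<1/k\}$ and $\{U_1\nu\ge M_k\}$. The first two pieces tend to $0$ by hypothesis. The last two are controlled by the estimate ${\rm Cap}(\{\tilde u>\lambda\})\le\lambda^{-2}\|u\|_{\mathcal E_1}^2$, after choosing $M_k$ large enough along a diagonal sequence. Finally, (\ref{2.3}) follows because the exceptional set $E\setminus\bigcup F_k$ is contained in $(E\setminus\bigcup F_n)\cup N$ with ${\rm Cap}(N)=0$, and $\mu$ charges neither of these sets.
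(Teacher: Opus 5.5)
The paper gives no proof of this statement; it is quoted from \cite[Theorem 2.2.4]{MR2778606}, so your argument has to stand on its own. Your (iii)$\Rightarrow$(ii) and the (S.1) half of (ii)$\Rightarrow$(i) are fine.

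For (\ref{2.1}) you need no potentials. Elements of ${\sf S}_0$ are Radon measures, so intersecting $F_k$ with a compact exhaustion $K_k\subset{\rm int}\,K_{k+1}$ gives finite mass and keeps both (\ref{2.2}) and the union. Your intersection with $\{\tilde U_1\mu^{F_k}\ge 1/j\}$ is actually harmful. These sets can miss a set of positive capacity where the potential vanishes (e.g.\ a component of $E$ not charged by $\mu$, or trivially $\mu=0$), and then (\ref{2.2}) fails.

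\noindent\textbf{The main gap.} It is in (i)$\Rightarrow$(iii), at the only nontrivial point: showing that the finite measure $\nu=\mu^{F_k}$, which merely charges no set of zero capacity, has finite energy after a suitable cut-down.
\begin{itemize}
\item Cutting down by $G_k$, $K_k$ and $\{h\ge 1/k\}$ does nothing here. For Brownian motion on $\mathbb{R}^3$, $\nu({\rm d}x)=|x|^{-5/2}1_{\{|x|<1\}}{\rm d}x$ is finite, charges no polar set, and lives where $h$ is bounded below. Yet $R_1\nu(x)\asymp|x|^{-1/2}$ near $0$ and $\int R_1\nu\,{\rm d}\nu=\infty$, so $\nu\notin{\sf S}_0$.
\item Your fallback restricts to $\{U_1\nu\le M\}$, invokes the maximum principle, and bounds ${\rm Cap}(\{U_1\nu\ge M_k\})$ by $M_k^{-2}\Vert U_1\nu\Vert_{\mathcal{E}_1}^2$. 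Each step presupposes $U_1\nu\in\mathcal{F}$, i.e.\ $\nu\in{\sf S}_0$, which is exactly what is to be proved. The argument is therefore circular, and $h$ never enters any estimate.
\item Likewise, ${\rm Cap}(\{h<1/k\}\cap K)\to 0$ does not follow from that Chebyshev bound; it needs quasi-continuity of $h$ plus compactness.
\end{itemize}

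\noindent\textbf{What is missing.} You need a potential defined for a merely finite measure, together with a capacity bound for its superlevel sets in terms of mass alone. Under $\mathbf{(AC)}$ this is available.
\begin{itemize}
\item Put $u=R_1\nu$ and take a compact $K\subset\{u>M\}$. The symmetry of $R_1(x,y)$ and the $1$-equilibrium measure $\lambda_K$ give ${\rm Cap}(K)=\lambda_K(K)\le M^{-1}\int u\,{\rm d}\lambda_K=M^{-1}\int R_1\lambda_K\,{\rm d}\nu\le \nu(E)/M$. By capacitability, ${\rm Cap}(\{u>M\})\le\nu(E)/M$.
\item Remove an open set of capacity at most $2\nu(E)/M$ containing $\{u>M\}$. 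This leaves a closed $F$ such that $\rho:=1_F\nu$ satisfies $\int R_1\rho\,{\rm d}\rho\le M\rho(E)$.
\item A smoothing argument gives $\rho\in{\sf S}_0$. The measures $(\beta R_{\beta+1}\rho)\,\mathfrak{m}$ converge vaguely to $\rho$, their $\mathcal{E}_1$-energies are bounded by $\int R_1\rho\,{\rm d}\rho$, and the closedness result quoted from \cite{Nishimori_Tomisaki_Tsuchida_Uemura_2025} applies.
\item The ${\sf S}_0$ maximum principle then gives $R_1\rho\le M$, so $\rho\in{\sf S}_{00}$.
\item A diagonal choice of $M$ along the nest from (S.2), followed by taking finite unions, produces the required generalized compact nest.
\end{itemize}
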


		For any $\mu_n,\mu \in {\sf S}_0$ and for all compact set $K \subset E$, we say $\mu_n$ converges to $\mu$ \text{weakly} on $K \subset E$ if for any $\varphi \in C(K)$,
		\[
		\lim_{n\to\infty}\int_K \varphi \ {\rm d}\mu_n=\int_K \varphi \ {\rm d}\mu.
		\]
		For any $\mu_n,\mu \in {\sf S}_0$, we say $\mu_n$ converges to $\mu$ \text{vaguely} if for any $\varphi \in C_0(E)$,
		\[
		\lim_{n\to\infty}\int_E \varphi \ {\rm d}\mu_n=\int_E \varphi \ {\rm d}\mu.
		\]
		
	\begin{proposition}[\cite{Nishimori_Tomisaki_Tsuchida_Uemura_2025} Proposition3.8]\label{prop 2.1}
		Assume that $\{ \mu_n\}$ are measures in ${\sf S}_0$ which converges to some $\mu \in {\sf S}_0$ in $\rho$. Then $\mu_n$ converges to $\mu$ vaguely.
	\end{proposition}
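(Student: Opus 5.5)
The plan is to test the measures against the dense class $\mathcal{F}\cap C_0(E)$ using the defining identity of the $1$-potential, and then pass to all of $C_0(E)$ by a uniform approximation. This second step needs a uniform mass bound on compact sets.

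\textbf{Step 1 (test functions in $\mathcal{F}\cap C_0(E)$).} For $v\in\mathcal{F}\cap C_0(E)$, the defining relation of $U_1$ gives
\[
\Bigl|\int_E v\,{\rm d}\mu_n-\int_E v\,{\rm d}\mu\Bigr|=\bigl|\mathcal{E}_1(U_1\mu_n-U_1\mu,v)\bigr|\le \rho(\mu_n,\mu)\,\Vert v\Vert_{\mathcal{E}_1}.
\]
The last bound is Cauchy--Schwarz. Hence $\int v\,{\rm d}\mu_n\to\int v\,{\rm d}\mu$ for every such $v$.

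\textbf{Step 2 (local mass bound).} Fix a compact $K$. By regularity, $\mathcal{F}\cap C_0(E)$ is uniformly dense in $C_0(E)$, and it is closed under normal contractions. So one can pick $w\in\mathcal{F}\cap C_0(E)$ with $w\ge 1$ on $K$ and $w\ge 0$ everywhere: take $\psi\in C_0(E)$ with $\psi\ge 2$ on $K$, approximate it uniformly within $1/2$, and take the positive part. This is standard, and the positive part stays in $\mathcal{F}\cap C_0(E)$. Then
\[
\mu_n(K)\le\int w\,{\rm d}\mu_n\le \Vert U_1\mu_n\Vert_{\mathcal{E}_1}\Vert w\Vert_{\mathcal{E}_1}.
\]
Since $\Vert U_1\mu_n\Vert_{\mathcal{E}_1}\le\Vert U_1\mu\Vert_{\mathcal{E}_1}+\rho(\mu_n,\mu)$ is bounded, we get $\sup_n\mu_n(K)<\infty$. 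The same bound holds for $\mu(K)$.

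\textbf{Step 3 (approximation).} Let $\varphi\in C_0(E)$ and choose a compact $K$ whose interior contains ${\rm supp}[\varphi]$. Choose $w$ as in Step 2 for $K$, and let $M:=\sup_n\int w\,{\rm d}\mu_n+\int w\,{\rm d}\mu<\infty$. Given $\varepsilon>0$, pick $v\in\mathcal{F}\cap C_0(E)$ with $\Vert\varphi-v\Vert_\infty<\varepsilon$.

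The delicate point is that $v$ need not be supported in $K$, so the mass outside $K$ must be controlled. I will handle this by comparing with $w$ rather than with $1_K$. Replace $v$ by a truncated version $v_\varepsilon:=(v\wedge (\varepsilon w))\vee(-\varepsilon w)$ when $|\varphi|$ is small. More cleanly, take $\tilde v:=(v^+\wedge \Vert\varphi\Vert_\infty w)-(v^-\wedge\Vert\varphi\Vert_\infty w)$, which lies in $\mathcal{F}\cap C_0(E)$ by the lattice property. Outside $K$ we have $|\tilde v|\le\varepsilon$ and $|\tilde v|\le\Vert\varphi\Vert_\infty w$. It is simpler to ensure $w\ge1$ on the support of $v$ itself. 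To do this, choose $v$ first, then choose $w\ge1$ on ${\rm supp}[v]\cup{\rm supp}[\varphi]$. Because $M$ depends on $w$, I fix $w$ attached to a fixed compact neighbourhood $K$ and require $v$ to be supported in $K$. This is achievable by multiplying by a cutoff $\chi\in\mathcal{F}\cap C_0(E)$ with $0\le\chi\le1$, $\chi=1$ on ${\rm supp}[\varphi]$ and ${\rm supp}[\chi]\subset K$. Then $\chi v\in\mathcal{F}\cap C_0(E)$ (bounded functions in $\mathcal{F}$ form an algebra), and $\Vert\varphi-\chi v\Vert_\infty\le\Vert\varphi-v\Vert_\infty<\varepsilon$.

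With this choice,
\[
\Bigl|\int\varphi\,{\rm d}\mu_n-\int\varphi\,{\rm d}\mu\Bigr|\le 2\varepsilon M+\Bigl|\int\chi v\,{\rm d}\mu_n-\int\chi v\,{\rm d}\mu\Bigr|.
\]
Here I use $\mu_n(K),\mu(K)\le M$. By Step 1 the last term tends to $0$, and letting $\varepsilon\downarrow0$ gives vague convergence.

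The main obstacle is the existence of the cutoff $\chi$ in $\mathcal{F}\cap C_0(E)$ with the stated support properties. I would obtain it from regularity in the standard way: uniformly approximate a continuous Urysohn function, then apply a normal contraction such as $t\mapsto((2t-1/2)\vee0)\wedge1$, which preserves $\mathcal{F}$.
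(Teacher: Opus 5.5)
Your proof is correct. The paper gives no proof of its own here and simply cites \cite{Nishimori_Tomisaki_Tsuchida_Uemura_2025}, and your argument is essentially the standard one: test against $\mathcal{F}\cap C_0(E)$ via $\mathcal{E}_1(U_1\mu_n-U_1\mu,v)$, bound the local masses uniformly, and approximate uniformly by functions $\chi v$ supported in a fixed compact set. You can delete the abandoned detour with $v_\varepsilon$ and $\tilde v$ in Step 3, and you could drop $w$ altogether: since $|\varphi-\chi v|\le\varepsilon\chi$, the error is at most $\varepsilon\int\chi\,{\rm d}\mu_n=\varepsilon\,\mathcal{E}_1(U_1\mu_n,\chi)$, which is uniformly bounded in $n$.
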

	\begin{proposition}[\cite{Nishimori_Tomisaki_Tsuchida_Uemura_2025} Proposition3.9]
		Let $\{\mu_n \}$ be a sequence in ${\sf S}_0$. Assume that $\{ \mu_n\}$ is bounded with respect to $\rho$, that is, $\sup_n\mathcal{E}(U_1 \mu_n, U_1 \mu_n)<\infty$. If that measures $\{\mu_n\}$ converges to a measure $\mu$ vaguely, then $\mu \in {\sf S}_0$ and $\{ \mu_n\}$ converges to $\mu$ weakly with respact to $\rho$.
	\end{proposition}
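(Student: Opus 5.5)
Assume $\sup_n \mathcal{E}_1(U_1\mu_n,U_1\mu_n)=:M<\infty$ and $\mu_n\to\mu$ vaguely. The plan has two parts: show $\mu\in{\sf S}_0$, then upgrade boundedness plus vague convergence to $\rho$-convergence. We read the conclusion "converges weakly with respect to $\rho$" as weak convergence of $U_1\mu_n$ to $U_1\mu$ in the Hilbert space $(\mathcal{F},\mathcal{E}_1)$. Throughout we use only regularity of $(\mathcal{E},\mathcal{F})$ and the Riesz representation characterization of ${\sf S}_0$.

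\textbf{Step 1: $\mu\in{\sf S}_0$.} Take $v\in\mathcal{F}\cap C_0(E)$. Then $|v|\in\mathcal{F}\cap C_0(E)$ and $\mathcal{E}_1(|v|,|v|)\le\mathcal{E}_1(v,v)$. Since $|v|$ has compact support and is continuous, vague convergence gives
\[
\int_E|v|\,{\rm d}\mu=\lim_{n\to\infty}\int_E|v|\,{\rm d}\mu_n=\lim_{n\to\infty}\mathcal{E}_1(U_1\mu_n,|v|)\le \sqrt{M}\,\sqrt{\mathcal{E}_1(v,v)}.
\]
Vague limits of positive Radon measures are positive Radon, so this is exactly the defining inequality of ${\sf S}_0$ with $C=\sqrt M$. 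Hence $\mu\in{\sf S}_0$, and $U_1\mu$ exists.

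\textbf{Step 2: weak convergence in $\mathcal{F}$.} The sequence $\{U_1\mu_n\}$ is bounded in the Hilbert space $(\mathcal{F},\mathcal{E}_1)$. For every $v\in\mathcal{F}\cap C_0(E)$, vague convergence gives
\[
\mathcal{E}_1(U_1\mu_n,v)=\int_E v\,{\rm d}\mu_n\longrightarrow\int_E v\,{\rm d}\mu=\mathcal{E}_1(U_1\mu,v).
\]
By regularity, $\mathcal{F}\cap C_0(E)$ is $\mathcal{E}_1$-dense in $\mathcal{F}$. A uniform bound together with convergence on a dense set yields convergence against every $w\in\mathcal{F}$, by a standard $\varepsilon/3$ argument. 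Hence $U_1\mu_n\to U_1\mu$ weakly in $\mathcal{F}$. Equivalently, every subsequence has a further weakly convergent subsequence, and the limit is identified as $U_1\mu$ through its pairings with the dense set.

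\textbf{Main obstacle.} The delicate point is the interpretation of the conclusion, not the argument itself. Strong $\rho$-convergence would in general fail: mass can oscillate at fine scales while remaining vaguely convergent. So only weak convergence in $\mathcal{F}$ should be claimed. A secondary technical point is that vague convergence is tested only on $C_0(E)$; this is why it is essential to use test functions in $\mathcal{F}\cap C_0(E)$ and to pass to all of $\mathcal{F}$ only via density.
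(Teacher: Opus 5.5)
Your argument is correct and complete. The paper gives no proof of this proposition; it is quoted from Nishimori--Tomisaki--Tsuchida--Uemura, so there is nothing in the paper to compare against.

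Your argument is the standard one. Cauchy--Schwarz together with the normal contraction $v\mapsto|v|$ gives $\int_E|v|\,{\rm d}\mu\le\sqrt{M}\,\Vert v\Vert_{\mathcal{E}_1}$ for the vague limit. Then $\mathcal{E}_1$-boundedness, plus convergence of $\mathcal{E}_1(U_1\mu_n,v)=\int_E v\,{\rm d}\mu_n$ on the $\mathcal{E}_1$-dense core $\mathcal{F}\cap C_0(E)$, gives weak convergence of $U_1\mu_n$ to $U_1\mu$ in $(\mathcal{F},\mathcal{E}_1)$.

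You silently changed the hypothesis, and you should say so explicitly. The displayed condition in the statement is $\sup_n\mathcal{E}(U_1\mu_n,U_1\mu_n)<\infty$, whereas you assumed $\sup_n\mathcal{E}_1(U_1\mu_n,U_1\mu_n)<\infty$. Your reading is the right one, since it is what ``bounded with respect to $\rho$'' means.

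The difference matters, because the literal $\mathcal{E}$-version is false. Take one-dimensional Brownian motion and $\mu_n=1_{[-n,n]}\,{\rm d}x$. Then $(U_1\mu_n)'=R_1(\cdot,-n)-R_1(\cdot,n)$, so $\mathcal{E}(U_1\mu_n,U_1\mu_n)$ is bounded uniformly in $n$, while $\Vert U_1\mu_n\Vert_{L^2}^2$ grows like $n$. Moreover $\mu_n$ converges vaguely to Lebesgue measure, which is not in ${\sf S}_0$; test it against $\phi(\cdot/L)$ with $L\to\infty$. So with only the $\mathcal{E}$-bound, both conclusions fail.
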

	\begin{proposition}[\cite{Nishimori_Tomisaki_Tsuchida_Uemura_2025} Proposition3.10]
		 \begin{itemize}
			\item[{\rm (i)}] (monotonicity) Let $\mu$ and $\nu$ be Borel measures on $(E,\mathscr{B}(E))$. Assume that $\mu \leq \nu$, that is, $\mu(A) \leq \nu(A)$ for $A \in \mathscr{B}(E)$. If $\nu \in {\sf S}_0$, then $\mu \in {\sf S}_0$ and $U_\alpha \mu \leq U_\alpha \nu$.
			\item[{\rm (ii)}] (convex cone) If $\mu,\nu \in {\sf S}_0$, $a,b\geq 0$ and $a > 0$, then  $a \mu + b \nu \in {\sf S}_0$ and $U_\alpha (a \mu + b \nu)= a U_\alpha \mu + b U_\alpha \nu$.
			\item[{\rm (iii)}] (ideal) Let $\mathscr{B}_{b,+}(E)$ be the set of all nonnegatiove bounded measurable functions on $E$. Then $\mathscr{B}_{b,+}(E)$ is an ideal in $\mathcal{S}_0$ (i.e. $\mathscr{B}_{b,+}(E){\sf S}_0 \subset {\sf S}_0$) in the sense that $f\mu$ belongs to ${\sf S}_0$ whenever $\mu \in {\sf S}_0$ and $f \in \mathscr{B}_{n,+}(E)$.
		\end{itemize}
	\end{proposition}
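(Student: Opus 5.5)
The three statements are standard facts about ${\sf S}_0$, and all of them come from the characterization of $U_\alpha\mu$ by the Riesz representation on $(\mathcal{F},\mathcal{E}_\alpha)$. The plan is to handle (ii) and (iii) directly from the defining inequality of ${\sf S}_0$, and then obtain (i) from (iii) together with a comparison argument for potentials.

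For (ii), take $v\in\mathcal{F}\cap C_0(E)$. Then
\[
\int_E|v|\,{\rm d}(a\mu+b\nu)\le (aC_\mu+bC_\nu)\sqrt{\mathcal{E}_1(v,v)},
\]
so $a\mu+b\nu\in{\sf S}_0$, and it is Radon since $a\mu+b\nu$ is. Both $U_\alpha(a\mu+b\nu)$ and $aU_\alpha\mu+bU_\alpha\nu$ satisfy the defining identity $\mathcal{E}_\alpha(\cdot,v)=\int v\,{\rm d}(a\mu+b\nu)$ for all $v\in\mathcal{F}\cap C_0(E)$. This class is $\mathcal{E}_1$-dense in $\mathcal{F}$ by regularity, so the two potentials coincide.

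For (iii), take $f\in\mathscr{B}_{b,+}(E)$ and $v\in\mathcal{F}\cap C_0(E)$. Then
\[
\int|v|f\,{\rm d}\mu\le\|f\|_\infty\int|v|\,{\rm d}\mu\le \|f\|_\infty C\sqrt{\mathcal{E}_1(v,v)},
\]
and $f\mu$ is Radon, so $f\mu\in{\sf S}_0$.

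For (i), the membership part is immediate. Since $\mu\le\nu$, we have $\int|v|\,{\rm d}\mu\le\int|v|\,{\rm d}\nu\le C\|v\|_{\mathcal{E}_1}$, and $\mu$ is Radon because $\nu$ is. It remains to prove $U_\alpha\mu\le U_\alpha\nu$, and this is where the real work lies. By (ii) the measure $\nu-\mu$ is positive, and both $\mu$ and $\nu-\mu$ lie in ${\sf S}_0$ by the estimate just shown. Linearity of the Riesz representation then gives $U_\alpha\nu-U_\alpha\mu=U_\alpha(\nu-\mu)$, which is the same uniqueness argument as in (ii) extended to a difference. So it suffices to show that $U_\alpha\lambda\ge0$ $\mathfrak{m}$-a.e. for every $\lambda\in{\sf S}_0$.

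This positivity is the main obstacle. I would prove it through the variational characterization. Put $u=U_\alpha\lambda$ and $w=u\wedge0$. The Markov property gives $w\in\mathcal{F}$ and $\mathcal{E}_\alpha(u^+,u^-)\le0$. To apply the defining identity to $-w=u^-\ge0$, which need not lie in $\mathcal{F}\cap C_0(E)$, first extend the identity $\mathcal{E}_\alpha(u,v)=\int\tilde v\,{\rm d}\lambda$ to all $v\in\mathcal{F}$ using quasi-continuous versions $\tilde v$. The extension comes from approximating $v$ in $\mathcal{E}_1$ by elements of $\mathcal{F}\cap C_0(E)$: a subsequence converges q.e., and $\lambda$ charges no zero-capacity set by Theorem \ref{thm2.1}. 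Then
\[
\mathcal{E}_\alpha(u^-,u^-)\le -\mathcal{E}_\alpha(u,u^-)=-\int\widetilde{u^-}\,{\rm d}\lambda\le0,
\]
so $u^-=0$ and $U_\alpha\mu\le U_\alpha\nu$.
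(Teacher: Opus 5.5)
The paper gives no proof of this proposition. It is quoted from \cite[Proposition 3.10]{Nishimori_Tomisaki_Tsuchida_Uemura_2025} and used as a black box, so there is no in-paper argument to compare your route against. Your argument is correct and follows the standard Fukushima--Oshima--Takeda route:
(ii) and (iii) come from the defining inequality of ${\sf S}_0$ together with uniqueness in the Riesz representation, and the comparison in (i) reduces to nonnegativity of $\alpha$-potentials. You prove that nonnegativity by testing against $u^-$ and using $\mathcal{E}_\alpha(u^+,u^-)\le 0$. That inequality does follow from positivity of $T_t$, since $\mathcal{E}(u^+,u^-)=\lim_{t\downarrow 0}\bigl(-t^{-1}(T_tu^+,u^-)\bigr)$ and $(u^+,u^-)=0$.

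A few steps need tightening.

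First, when you extend $\mathcal{E}_\alpha(u,v)=\int\tilde v\,{\rm d}\lambda$ to all $v\in\mathcal{F}$, q.e.\ convergence of a subsequence of $v_n$ does not by itself let you pass to the limit in $\int v_n\,{\rm d}\lambda$. You also need $\{v_n\}$ to be Cauchy in $L^1(\lambda)$; this is exactly the ${\sf S}_0$ inequality applied to $v_n-v_m\in\mathcal{F}\cap C_0(E)$. The $L^1(\lambda)$-limit is then identified with $\tilde v$ $\lambda$-a.e., because $\lambda$ charges no set of zero capacity.

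Second, the fact $\widetilde{u^-}\ge 0$ q.e.\ should be stated. For example, $\widetilde{u^-}=(\tilde u)^-$ q.e.\ by uniqueness of quasi continuous versions.

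Third, in (i), when $\nu(E)=\infty$ the expression $\nu(A)-\mu(A)$ can be $\infty-\infty$. Define instead $\nu-\mu:=(1-g)\nu$, where $\mu=g\nu$ with $0\le g\le 1$ by Radon--Nikodym ($\nu$ is $\sigma$-finite). Positivity of this measure comes from $\mu\le\nu$, not from (ii) as you wrote; (ii) supplies the additivity $U_\alpha\nu=U_\alpha\mu+U_\alpha(\nu-\mu)$. Alternatively, avoid $\nu-\mu$ altogether and run your positivity argument directly on $w=U_\alpha\nu-U_\alpha\mu$, using $\mathcal{E}_\alpha(w,w^-)=\int\widetilde{w^-}\,{\rm d}\nu-\int\widetilde{w^-}\,{\rm d}\mu\ge 0$.
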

	Under assumption \textbf{(AC)}, there exists an $\alpha$-order resolvent kernel $R_\alpha (x,y)$ which is defined for all $x,y \in E$ (see \cite[Lemma 4.2.4]{MR2778606}). For a non-negative Borel measure $\mu$, we write $R_\alpha \mu (x):= \int_E R_\alpha (x,y)\mu({\rm d}y)$. In this case, the function $R_\alpha \mu(x)$ is a quasi continuous and $\alpha$-excessive version of the $\alpha$-potential $U_\alpha \mu$ of $\mu \in {\sf S}_0$, $\alpha>0$.
	\begin{dy}[Dynkin class]
		A positive smooth measure $\mu\in{\sf S}$ is said to be of \textit{Dynkin class} ${\sf S}_{D}$ if 
		\[
		\Vert R_\alpha \mu \Vert_\infty <\infty, \quad \text{for some / any}\quad \alpha>0.
		\]
	\end{dy}
	\begin{dy}[Uniformly Dynkin]
		A family  $\{\mu_n\}$ of positive smooth measures is said to be of \textit{uniformly  local Dynkin} if 
		\[
		\sup_{n\in \mathbb{N}} \Vert R_\alpha \mu_n \Vert_\infty <\infty, \quad \text{for some / any}\quad \alpha>0.
		\]
	\end{dy}
		\begin{dy}[Local Dynkin class]
		A positive smooth measure $\mu\in{\sf S}$ is said to be of \textit{local Dynkin class} ${\sf S}_{L\!D}$ if $1_K \mu \in {\sf S}_D$ for any compact set $K \subset E$, i.e.,
		\[
		\Vert R_\alpha (1_K\mu) \Vert_\infty <\infty, \quad \text{for some / any}\quad \alpha>0.
		\]
	\end{dy}
	\begin{dy}[Uniformly local Dynkin]
		A family $\{\mu_n\}$ of positive smooth measures is said to be of \textit{uniformly  local Dynkin} if $1_K \mu_n$ is of uniformly Dynkin for any  compact set $K \subset E$, i.e.,
		\[
		\sup_{n\in \mathbb{N}} \Vert R_\alpha (1_K\mu_n) \Vert_\infty <\infty, \quad \text{for some / any}\quad \alpha>0.
		\]
	\end{dy}
	\begin{dy}[Green-tight  Dynkin class measure]
		$\mu$ is said to be an \textit{$\alpha$-order Green-tight measure of Dynkin class with respect to} $\mathbb{M}$ if $\mu \in {\sf S}_D$ and for any $\varepsilon>0$ and $\alpha>0$  there exists a compact subse $K=K(\varepsilon)$ of $E$ such that 
		\[
		\sup_{x \in E} R_\alpha (1_{K^c} \mu)(x)<\varepsilon.
		\]
	\end{dy}
	The set of all $\alpha$-order Green-tight measure of Dynkin class with respect to $\mathbb{M}$ on $E$ is denoted by  ${\sf S}_{D_\infty}^\alpha$. For more details on the definitions and properties of Green-tight, please refer to \cite{MR1926893} and \cite{MR3747482}.
	\begin{dy}[Uniformly Green-tight Dynkin measures]
		A family of Dykin measures $\{\mu_n\}$ is said to be  of  \textit{$\alpha$-order uniformly Green-tight measures of   Dynkin with respect to} $\mathbb{M}$ if $\{\mu_n\}$ is of uniformly Dynkin and for any $\varepsilon>0$ and $\alpha>0$  there exists a compact subse $K=K(\varepsilon)$ of $E$ such that 
		\[
		\sup_{n \in \mathbb{N}}\sup_{x \in E} R_\alpha (1_{K^c} \mu_n)(x)<\varepsilon\quad \text{and}\quad \sup_{n\in \mathbb{N}} \int_{K^c} R_\alpha  (1_{K^c} \mu_n) \ {\rm d}\mu_n<\varepsilon.
		\]
	\end{dy}
	The following inequality is called \textit{Stollmann-Voigt’s inequality} (see \cite[Theorem 3.1]{MR1378151}): for $ \mu \in {\sf S}_{D}$, $\alpha >0 $
	\begin{equation}\label{2.4}
		\int_E f^2\ {\rm d}\mu \leq \Vert R_\alpha {\mu} \Vert_\infty \mathcal{E}_\alpha(f,f),\quad\text{for}\quad f\in\mathcal{F}.
	\end{equation}
	Form (\ref{2.4}), any $\mu \in {\sf S}_D$ is a Radon measure on $E$, because of the regularity of the Dirichlet form (see \cite{MR3747482}). For $\mu \in {\sf S}_D$, $\Vert R_\alpha \mu \Vert_\infty < \infty$ for $\alpha>0$ (see \cite[Lemma 2.9]{MR5063570}). Similarly, if $\{\mu_n\}$ is of uniformly Dynkin, then $\sup_n \Vert R_\alpha \mu_n \Vert_\infty<\infty$ for $\alpha>0$. For any compact set $K \subset E$, since Dirichlet form $(\mathcal{E},\mathcal{F})$ is regular, then there exists $\varphi\in \mathcal{F} \cap C_0(E)$ such that $\varphi\equiv 1$ on $K$ and  $0\leq\varphi \leq1$. Then, from (\ref{2.4}), for any $\alpha>0$
	\[
	\mu_n(K) = \int_K \varphi^2 \ {\rm d}\mu_n \leq \Vert R_\alpha \mu_n \Vert_\infty \mathcal{E}_\alpha(\varphi,\varphi).
	\]
	Therefore, $\sup_n \mu_n(K)<\infty$ for any compact set $K\subset E$.
	\begin{proposition}
		If $\mu \in {\sf S}_{L\!D}$, then $\mu^K \in {\sf S}_{00}$ for any compact set $K\subset E$.
	\end{proposition}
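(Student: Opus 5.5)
We must show three things for $\mu^K$: it lies in ${\sf S}_0$, it has finite total mass, and its 1-potential is bounded. By the definition of ${\sf S}_{L\!D}$, $\mu^K=1_K\mu\in{\sf S}_D$, so $\Vert R_1\mu^K\Vert_\infty<\infty$. The plan is to get each of the three properties from this single bound, using the Stollmann--Voigt inequality (\ref{2.4}) and the regularity of $(\mathcal{E},\mathcal{F})$.

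\emph{Finite mass.} Regularity gives $\varphi\in\mathcal{F}\cap C_0(E)$ with $0\le\varphi\le1$ and $\varphi\equiv1$ on $K$. Applying (\ref{2.4}) to $\mu^K$ with $\alpha=1$ yields
\[
\mu^K(E)=\mu(K)\le\int_E\varphi^2\,{\rm d}\mu^K\le\Vert R_1\mu^K\Vert_\infty\,\mathcal{E}_1(\varphi,\varphi)<\infty ,
\]
exactly as in the computation preceding the statement. In particular $\mu^K$ is a finite Radon measure.

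\emph{Finite energy integral.} For $v\in\mathcal{F}\cap C_0(E)$, Cauchy--Schwarz and (\ref{2.4}) (applied to $|v|$, or directly to $v$ since (\ref{2.4}) holds for all $f\in\mathcal{F}$) give
\[
\int_E|v|\,{\rm d}\mu^K\le\mu(K)^{1/2}\Big(\int_E v^2\,{\rm d}\mu^K\Big)^{1/2}\le\big(\mu(K)\Vert R_1\mu^K\Vert_\infty\big)^{1/2}\sqrt{\mathcal{E}_1(v,v)} .
\]
Hence $\mu^K\in{\sf S}_0$. The Riesz representation then provides $U_1\mu^K\in\mathcal{F}$.

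\emph{Bounded potential.} This is the step needing care. We must identify $U_1\mu^K$ with $R_1\mu^K$, since only the latter is known to be bounded. Under \textbf{(AC)}, the paper has already noted that for $\mu\in{\sf S}_0$ the function $R_\alpha\mu$ is a quasi continuous, $\alpha$-excessive version of $U_\alpha\mu$. So $U_1\mu^K=R_1\mu^K$ $\mathfrak{m}$-a.e. (indeed q.e.), and therefore
\[
\Vert U_1\mu^K\Vert_\infty\le\sup_E R_1\mu^K<\infty .
\]
If one prefers to avoid quoting that identification, a direct check works. For $v\in\mathcal{F}\cap C_0(E)$, symmetry of the kernel and Fubini give $\mathcal{E}_1(R_1\mu^K,v)=\int v\,{\rm d}\mu^K$, first for $v=R_1 g$ with $g\in L^2$ and then by density. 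This argument requires knowing $R_1\mu^K\in\mathcal{F}$, which is why I would instead rely on the cited identification. Combining the three properties gives $\mu^K\in{\sf S}_{00}$.
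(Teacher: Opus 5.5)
Your proof is correct, and it takes a more direct route than the paper's.

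\textbf{The paper's route.} The paper does not apply the Stollmann--Voigt inequality (\ref{2.4}) to $\mu^K$ at this point. It takes a generalized compact nest $\{F_k\}$ from Theorem~\ref{thm2.2} with $\mu^{F_k}\in{\sf S}_{00}$ and splits $\int|v|\,{\rm d}\mu^K$ over $K\setminus F_k$ and $K\cap F_k$.
\begin{itemize}
\item The first piece is bounded by $\Vert v\Vert_\infty\,\mu(K\setminus F_k)$. This tends to $0$ because $\mu$ charges no set of zero capacity.
\item The second piece is bounded, in effect by Cauchy--Schwarz in $\mathcal{E}_1$, using that $\mu^{K\cap F_k}$ has finite energy with $\mathcal{E}_1(U_1\mu^{K\cap F_k},U_1\mu^{K\cap F_k})=\int R_1\mu^{K\cap F_k}\,{\rm d}\mu^{K\cap F_k}\le\Vert R_1\mu^K\Vert_\infty\,\mu(K)$.
\end{itemize}
Both arguments reach the same constant $\sqrt{\mu(K)\Vert R_1\mu^K\Vert_\infty}$.

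\textbf{What each buys.} Your version follows in one line from (\ref{2.4}) plus Cauchy--Schwarz in $L^2(\mu^K)$. The paper's version rebuilds the needed energy bound by hand from the nest and the energy identity for ${\sf S}_0$-measures. It therefore relies less on the full strength of (\ref{2.4}) for general smooth Dynkin measures, whose proof itself goes through this kind of approximation. Even so, the paper silently uses $\mu(K)<\infty$, which it only has via (\ref{2.4}), so both routes ultimately rest on that inequality.

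\textbf{Where yours is more complete.} You handle the remaining two requirements of ${\sf S}_{00}$ explicitly: you derive $\mu(K)<\infty$, and you justify $\Vert U_1\mu^K\Vert_\infty<\infty$ through the q.e.\ identification $U_1\mu^K=R_1\mu^K$. The paper passes from $\mu^K\in{\sf S}_0$ to $\mu^K\in{\sf S}_{00}$ with only ``hence''.
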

	\begin{proof}
		For $\mu \in {\sf S}_{L\!D}$ and $K \subset E$ is a compact set, there exists  a generalized compact nest $\{F_k\}$ satisfying (\ref{2.3}) and $\mu^{F_k} \in {\sf S}_{00}$ for each $k$ by Theorem~\ref{thm2.2}. Set $G_k:= K\cap F_k$, then
		\[
		{\rm Cap}\left(K\setminus \bigcup_{k=1}^\infty G_k\right)\leq {\rm Cap}(K \setminus G_k) \to 0\quad \text{as}\quad k \to \infty.
		\]
		Since $\mu$ charges no sets of zero capacity, we have $\mu(K\setminus \bigcup_{k=1}^\infty G_k)=0$ which implies that
		\[
		\lim_{k\to\infty} \mu(K \setminus G_k)=0.
		\]
		For any $v \in \mathcal{F} \cap C_0(E)$,
		\[
		\int_E |v| \ {\rm d}\mu^K=\int_{K \setminus G_k} |v| \ {\rm d}\mu+\int_{ K \cap G_k} |v| \ {\rm d}\mu=:{\rm (I)}+{\rm (II)}.
		\]
		Since,
		\[
		{\rm (I)} \leq \Vert v \Vert_\infty  \cdot\mu(K\setminus G_k) \to 0 \quad \text{as}\quad k \to \infty,
		\]
		and
		\[
		\begin{aligned}
			{\rm (II)}&\leq \sqrt{\int_{K}  R_1 \mu^{K} \ {\rm d}\mu } \cdot \sqrt{\mathcal{E}_1 (v,v)}\\
			&\leq\sqrt{\Vert R_1 \mu^{K}\Vert_\infty \cdot \mu(K)} \cdot \sqrt{\mathcal{E}_1 (v,v)}.
		\end{aligned}
		\]
		Therefore, for any $v \in \mathcal{F} \cap C_0(E)$,
		\[
		\int_E |v| \ {\rm d}\mu^K \leq C\sqrt{\mathcal{E}_1(v,v)},
		\]
		where $C:= \sqrt{\Vert R_1 \mu^{K} \Vert_\infty\cdot \mu(K)} $.  Then $\mu^K \in {\sf S}_0$ and hence $\mu^K \in {\sf S}_{00}$.
	\end{proof}
	\begin{proposition}\label{prop2.5}
		Assume $\{\mu_n\} $ is of uniformly local Dynkin. If $R_1 (\cdot,\cdot)$ is continuous on $K\times K$ where $K$ is compact set of $E$. Then $R_1 \mu_n^K(x)$  is uniformly equicontinuous on $K$, that is , for any $\varepsilon>0$ there exists $\delta>0$ such that for any $n \in \mathbb{N}$ and $x,y\in K$ satisfying  ${\sf d}(x,y)<\delta$, we have $\left|  R_1 \mu_n^K(x) - R_1 \mu_n^K(y) \right|<\varepsilon$.
	\end{proposition}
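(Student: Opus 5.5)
The plan is to write the difference of potentials directly as an integral against $\mu_n^K$:
\[
R_1\mu_n^K(x)-R_1\mu_n^K(y)=\int_K \bigl(R_1(x,z)-R_1(y,z)\bigr)\,\mu_n({\rm d}z),\qquad x,y\in K.
\]
We then bound it by a uniform modulus of continuity of the kernel times a uniform bound on the masses $\mu_n(K)$.

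The first ingredient is uniform continuity of the kernel. Since $R_1(\cdot,\cdot)$ is continuous on the compact set $K\times K$ (compact in the product metric), it is uniformly continuous there. So for any $\eta>0$ there is $\delta>0$ such that ${\sf d}(x,y)<\delta$ with $x,y\in K$ implies
\[
\sup_{z\in K}\left|R_1(x,z)-R_1(y,z)\right|<\eta.
\]
This follows by applying uniform continuity to the pairs $(x,z)$ and $(y,z)$, whose distance in $K\times K$ is at most ${\sf d}(x,y)$.

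The second ingredient is the uniform mass bound $M:=\sup_n\mu_n(K)<\infty$. This comes from the argument given just before the previous proposition. Choose $\varphi\in\mathcal{F}\cap C_0(E)$ with $\varphi\equiv1$ on $K$ and $0\le\varphi\le1$. Apply Stollmann–Voigt's inequality (\ref{2.4}) to the measure $\mu_n^{K'}$, where $K'$ is a compact set containing ${\rm supp}[\varphi]$, or simply to $\mu_n^K$ itself. This gives
\[
\mu_n(K)\le \sup_n\Vert R_1(1_K\mu_n)\Vert_\infty\,\mathcal{E}_1(\varphi,\varphi).
\]
The right-hand side is finite by the uniformly local Dynkin assumption. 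We also note that $R_1\mu_n^K(x)$ is finite everywhere for $x\in K$, so the difference above is well defined.

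Combining the two gives $|R_1\mu_n^K(x)-R_1\mu_n^K(y)|\le \eta M$ for every $n$, so taking $\eta=\varepsilon/(M+1)$ yields the claim. The only delicate point is making sure $R_1(x,z)$ is finite and continuous for all $z\in K$, so that the integrand is bounded. This is exactly what the continuity hypothesis on $K\times K$ provides: continuity forces boundedness on the compact set. No quasi-everywhere issue arises, because the kernel is defined pointwise under \textbf{(AC)}.
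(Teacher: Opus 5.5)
Your proof is correct and follows the paper's argument: uniform continuity of $R_1$ on $K\times K$ gives $\sup_{z\in K}|R_1(x,z)-R_1(y,z)|<\eta$, and this is multiplied by $\sup_n\mu_n(K)$. You also supply two details the paper glosses over. First, $\sup_n\mu_n(K)<\infty$ follows from the \emph{local} uniform Dynkin hypothesis by applying (\ref{2.4}) to $\mu_n^K$, whereas the paper's preceding remark only treats the global uniformly Dynkin case. Second, choosing $\eta=\varepsilon/(M+1)$ avoids dividing by $\sup_n\mu_n(K)$ when it vanishes.
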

	\begin{proof}
		Since $R_1(x,y)$ is continuous on $K \times K$, then $R_1(x,y)$ is uniformly continuous on $K \times K$. Then, for any $\tau>0$, there exists $\delta>0$ such that for any $x,y \in K$ satisfying ${\sf d}(x,y)<\delta$, we have
		\[
		\left|   R_1(x,z) - R_1 (y,z)  \right|<\tau\quad \text{ for any }\quad z \in K.
		\]
		Therefore, for any $x,y \in K$ satisfying ${\sf d}(x,y)<\delta$,
		\[
		\begin{aligned}
			\left| R_1 \mu_n^K(x)- R_1 \mu^K_n(y)   \right|&=\left| \int_K R_1(x,z) \mu_n({\rm d}z)  - \int_K R_1(y,z) \mu_n({\rm d}z)\right|\\
			&\leq \int_K \left|   R_1(x,z) - R_1 (y,z)  \right| \mu_n({\rm d}z)\\
			&< \tau\cdot\mu_n(K).
		\end{aligned}
		\]
		For any $\varepsilon>0$, take $\tau=\varepsilon/ \sup_n \mu_n(K)$, we have completed the proof.
	\end{proof}
	\begin{proposition}\label{prop2.6}
		Assume $R_1(\cdot,\cdot)$ is continuous on $E\times E$. If $\mu \in {\sf S}_{L\!D}$,  then $R_1 (1_K\mu)\in C_b(E)$ for any compact set $K \subset E$. Moreover, if $\mu \in {\sf S}_{D_\infty}^1$, then $R_1 \mu \in C_b(E)$.
	\end{proposition}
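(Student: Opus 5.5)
Boundedness is immediate: $\mu\in{\sf S}_{L\!D}$ gives $\Vert R_1(1_K\mu)\Vert_\infty<\infty$ by definition. For $\mu\in{\sf S}_{D_\infty}^1\subset{\sf S}_D$ we have $\Vert R_1\mu\Vert_\infty<\infty$ directly. So the only real content is continuity on all of $E$, not just on $K$.

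\textbf{Step 1 (first assertion).} Fix $x_0\in E$ and $x_j\to x_0$. Choose a compact neighbourhood $L$ of $x_0$, so that $x_j\in L$ for large $j$. Write
\[
R_1(1_K\mu)(x)=\int_K R_1(x,z)\,\mu({\rm d}z).
\]
By continuity of $R_1(\cdot,\cdot)$ on the compact set $L\times K$, it is uniformly continuous there. Hence $\sup_{z\in K}|R_1(x_j,z)-R_1(x_0,z)|\to0$. This is the argument of Proposition~\ref{prop2.5}, now with $x$ ranging over $L$ and $z$ over $K$ rather than both over $K$. Since $\mu(K)<\infty$ (because $\mu^K\in{\sf S}_{00}$ by the preceding proposition, or because Dynkin measures are Radon), we get
\[
|R_1(1_K\mu)(x_j)-R_1(1_K\mu)(x_0)|\le \mu(K)\sup_{z\in K}|R_1(x_j,z)-R_1(x_0,z)|\to0 .
\]
Thus $R_1(1_K\mu)\in C(E)$, and with boundedness, $R_1(1_K\mu)\in C_b(E)$.

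\textbf{Step 2 (Green-tight case).} Let $\mu\in{\sf S}_{D_\infty}^1$. Then $\mu\in{\sf S}_D\subset{\sf S}_{L\!D}$, since $R_1(1_K\mu)\le R_1\mu$. Given $\varepsilon>0$, pick a compact $K=K(\varepsilon)$ with
\[
\sup_{x\in E}R_1(1_{K^c}\mu)(x)<\varepsilon .
\]
Then $R_1\mu=R_1(1_K\mu)+R_1(1_{K^c}\mu)$, and $\Vert R_1\mu-R_1(1_K\mu)\Vert_\infty\le\varepsilon$. Taking $\varepsilon=1/m$ gives a sequence of functions in $C_b(E)$ (by Step 1) converging uniformly on $E$ to $R_1\mu$. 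A uniform limit of bounded continuous functions is bounded and continuous, so $R_1\mu\in C_b(E)$.

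\textbf{Main obstacle.} The only delicate point is Step 1: continuity must hold at every $x_0\in E$, including points outside $K$. This is why the hypothesis requires continuity of the kernel on $E\times E$ and not merely on $K\times K$, and why we localise $x$ in a compact neighbourhood $L$ so that uniform continuity on $L\times K$ is available. Everywhere-defined values pose no issue, because under \textbf{(AC)} the kernel $R_1(x,y)$ is defined for all $x,y$. The integrals are finite because the integrand is bounded on $L\times K$ and $\mu(K)<\infty$.
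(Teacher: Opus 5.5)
Your proof is correct and follows the paper's argument: continuity of $R_1(1_K\mu)$ comes from a uniform modulus of continuity of the kernel integrated against the finite mass $\mu(K)$, and the Green-tight case comes from approximating $R_1\mu$ uniformly by $R_1(1_K\mu)$. The paper phrases that second step as an explicit $\varepsilon/3$ estimate rather than a uniform limit, and your localisation of $x$ to a compact neighbourhood $L$ (uniform continuity on $L\times K$, with $z$ restricted to $K$) is more careful than the paper's claim of a modulus uniform in $z\in E$; you also make the boundedness explicit, which the paper leaves implicit.
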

	\begin{proof}
		For any $\mu \in {\sf S}_{L\!D}$ and any compact set $K$, since $R_1(x,y)$ is continuous on $E \times E$, then for any $\tau>0$ and $x\in E$, there exists $\delta>0$ such that for any $y\in E$ satisfying ${\sf d}(x,y)<\delta$, we have $\left| R_1(x,z)- R_1(y,z)  \right|<\tau$ for $z \in E$. Then,
		\[
		\begin{aligned}
			\left|  R_1 \mu^K(x) - R_1 \mu^K(y)  \right|&=\left|  \int_K R_1(x,z)\mu({\rm d}z )  -\int_K R_1(y,z)\mu({\rm d}z ) \right|\\
			&\leq \int_K \left|  R_1(x,z)-R_1(y,z)\right|\mu({\rm d}z ) \\
			&< \tau \mu(K).
		\end{aligned}
		\]
		For any $\varepsilon>0$, take $\tau=\varepsilon/\mu(K)$, we have $R_1 \mu^K \in C_b(E)$. For any $\mu \in {\sf S}_{D_\infty}^1$, we can see that $R_1\mu^K \in C_b(E)$ for any compact set $K \subset E$. Then for any $\varepsilon>0$, there exists $K\subset E$ such that $\Vert R_1(1_{K^c} \mu)\Vert_\infty<\varepsilon/3$. Since $R_1 \mu^K \in C_b(E)$, for any $x\in E$, then there exists $\delta>0$ such that for any $y\in E$ satisfying ${\sf d}(x,y)<\delta$, we have $\left|  R_1(x,z) - R_1(y,z)  \right|<\varepsilon/3\mu(K)$ for $z \in E$. Then
		\[
		\begin{aligned}
			\left|  R_1 \mu(x) - R_1 \mu(y)  \right|&=\left|  \int_E R_1(x,z)\mu({\rm d}z )  -\int_E R_1(y,z)\mu({\rm d}z ) \right|\\
			&\leq \int_K \left|  R_1(x,z)-R_1(y,z)\right|\mu({\rm d}z ) + \int_{K^c} \left|  R_1(x,z)-R_1(y,z)\right|\mu({\rm d}z )\\
			&<\varepsilon.
		\end{aligned}
		\]
	\end{proof}

	\section{Convergence of PCAFs}
	In this section, we introduce the definition of a positive continuous additive functional (PCAF).
	\begin{dy}
		A numerical function ${\sf A}_t(\omega)$, $t\geq0$, $\omega\in\Omega$ is a positive continuous additive functional if the following conditions hold:
		\begin{itemize}
			\item[(i)] For each $t\geq 0$, ${\sf A}_t(\cdot)$ is $\mathscr{F}_t$-measurable.
			\item[(ii)] There exists a set $\Lambda\in\mathscr{F}_\infty$ and $N\subset E$ with ${\rm Cap}(N)=0$ such that $\mathbb{P}_x(\Lambda)=1$ for any $x \in E\setminus N$ and $\theta_t \Lambda \subset \Lambda $ for any $t>0$. Moreover, for each $\omega \in \Lambda$, ${\sf A}_0(\omega)=0$, $|{\sf A}_t(\omega)|<\infty$ for any $t<\zeta(\omega)$, ${\sf A}_t(\omega)={\sf A}_{\zeta(\omega)}(\omega)$ for any $t\geq\zeta(\omega)$ and
			\[
			{\sf A}_{t+s}(\omega)={\sf A}_s(\omega)+{\sf A}_t(\theta_s\omega),\quad\text{for all}\quad t,s\geq0.
			\]
			\item[(iii)] For each $\omega \in \Lambda$, the map $t \mapsto {\sf A}_t(\omega)$ is nonnegative and continuous on $[0,+\infty)$.
		\end{itemize}
	\end{dy}
	The set of all PCAFs is denoted by $\mathbf{A}_c^+$. Two PCAFs ${\sf A}^{(1)}$ and ${\sf A}^{(2)}$, are said to be \textit{equivalent} if for each $t>0$, $\mathbb{P}_x({\sf A}_t^{(1)} = {\sf A}_t^{(2)})=1$ q.e. $x \in E$. If ${\sf A}^{(1)}$ and ${\sf A}^{(2)}$are equivalent, we write ${\sf A}^{(1)} \sim {\sf A}^{(2)}$. For any $\mu \in {\sf S}_{00}$, there exists PCAF ${\sf A^\mu}$ uniquely such that
	\begin{equation}\label{3.1}
	\mathbb{E}_x\left[ \int_{0}^{\infty} e^{-t} {\rm d}\ {\sf A}^\mu_t\right]=R_1 \mu(x),\quad \text{for all}\quad x\in E,
	\end{equation}
	see \cite[Theorem 5.1.6]{MR2778606}. In particular, when $\mu$ is only assumed to be a measure of finite energy integral, the above equality does not necessarily hold for all $x\in E$, but rather for q.e. $x \in E$. The following theorems describes the concrete relationship between ${\sf S}$ and $\mathbf{A}_c^+$.
	\begin{theorem}[\cite{MR2778606} Theorem 5.1.3]
		 The quotient space of $\mathbf{A}_c^+$ under the equivalence relation and the family ${\sf S}$ are in one-to-one correspondence under the following relation: For $\mu \in {\sf S}$ and ${\sf A}\in\mathbf{A}_c^+$,
		 \begin{equation}\label{3.2}
		 	\lim_{t \downarrow 0} \frac{1}{t}\int_E \mathbb{E}_x\left[  \int_{0}^{t} f(X_s)\ {\rm d}{\sf A}_s  \right]h(x)\mathfrak{m}({\rm d}x)=\int_E f(x) h(x)\mu({\rm d}x)
		 \end{equation}
		 where $h$ is any $\gamma$-excessive function and $f \in \mathscr{B}_+(E)$.
	\end{theorem}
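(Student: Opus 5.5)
The plan is to treat the three ingredients of the correspondence separately: existence of a PCAF for each $\mu\in{\sf S}$, the Revuz identity (\ref{3.2}), and uniqueness up to equivalence. Throughout I reduce to measures in ${\sf S}_{00}$ via Theorem~\ref{thm2.2}(iii), and use the PCAF ${\sf A}^\mu$ characterised by (\ref{3.1}).

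\emph{Step 1: the identity for $\mu\in{\sf S}_{00}$.} Take $\mu\in{\sf S}_{00}$ and ${\sf A}={\sf A}^\mu$. First I prove (\ref{3.2}) for $f\equiv1$ and $h=R_\gamma g$ with $g\in\mathscr{B}_{b,+}(E)\cap L^1(E;\mathfrak{m})$.
\begin{itemize}
\item By the Markov property and (\ref{3.1}), $\mathbb{E}_x[{\sf A}_t]$ is the difference $R_1\mu-e^{-t}P_tR_1\mu$ up to a term that is $O(t)$ relative to $\mathbb{E}_x[{\sf A}_t]$.
\item Integrating against $h\,\mathfrak{m}$ and using $\mathfrak{m}$-symmetry of $P_t$ moves the semigroup onto $h$.
\item For this $h$, the quotient $t^{-1}(h-e^{-\gamma t}P_th)$ converges to $g+(\gamma-\ldots)h$ in a controlled way.
\item The pairing of $R_1\mu$ with this expression reduces, by the definition of $U_1\mu$, to $\int h\,{\rm d}\mu$.
\end{itemize}
Replacing ${\sf A}$ by $f\cdot{\sf A}=\int_0^\cdot f(X_s)\,{\rm d}{\sf A}_s$ corresponds to replacing $\mu$ by $f\mu$. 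This is still in ${\sf S}_{00}$ for bounded $f$, by the ideal property in Proposition 2.3(iii), so general bounded $f$ follows; monotone convergence then gives $f\in\mathscr{B}_+(E)$.

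To pass from potentials $R_\gamma g$ to an arbitrary $\gamma$-excessive $h$, I use that $h$ is an increasing limit of such potentials, namely $h=\lim_\beta \beta R_{\gamma+\beta}h$ truncated. The key monotonicity is that $s\mapsto e^{-\gamma s}P_sh$ is decreasing. This makes $t^{-1}\int\mathbb{E}_x[\int_0^t f\,{\rm d}{\sf A}]h\,{\rm d}\mathfrak{m}$ monotone in the approximation, so the limit in $t$ and the limit in the approximation can be interchanged.

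\emph{Step 2: existence for general $\mu\in{\sf S}$.} By Theorem~\ref{thm2.2}(iii), choose a generalized compact nest $\{F_k\}$ satisfying (\ref{2.3}) with $\mu_k:=\mu^{F_k}\in{\sf S}_{00}$, and put ${\sf A}^k:={\sf A}^{\mu_k}$.
\begin{itemize}
\item By uniqueness in ${\sf S}_{00}$ (Step 3 below, applied at that level), $1_{F_k}\cdot{\sf A}^{k+1}\sim{\sf A}^k$. Hence the sequence ${\sf A}^k_t$ is increasing in $k$, off an exceptional set.
\item Define ${\sf A}_t:=\lim_k{\sf A}^k_t$.
\item Since ${\rm Cap}(K\setminus F_k)\to0$ for every compact $K$, the hitting times of $E\setminus F_k$ increase to $\zeta$ $\mathbb{P}_x$-a.s. for q.e. $x$ (the standard nest/hitting time lemma). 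So on $[0,\tau_k)$ the sequence stabilises, and ${\sf A}$ is finite and continuous on $[0,\zeta)$.
\item Additivity and the shift-invariant defining set $\Lambda$ pass to the limit.
\end{itemize}
Identity (\ref{3.2}) for ${\sf A}$ then follows from Step 1 for each ${\sf A}^k$ with $f1_{F_k}$, by monotone convergence in $k$ together with (\ref{2.3}).

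\emph{Step 3: injectivity and surjectivity.} For uniqueness, suppose ${\sf A},{\sf B}$ have the same Revuz measure $\mu\in{\sf S}_{00}$.
\begin{itemize}
\item Both have 1-potential $R_1\mu$ q.e.; this follows from (\ref{3.2}) with $h=R_1g$ and the symmetry computation of Step 1 run backwards.
\item Hence $M_t=\int_0^te^{-s}\,{\rm d}({\sf A}-{\sf B})_s$ is a continuous martingale of finite variation, so it vanishes and ${\sf A}\sim{\sf B}$.
\item For general $\mu$, localise on the nest as in Step 2.
\end{itemize}
For surjectivity, given ${\sf A}\in\mathbf{A}_c^+$:
\begin{itemize}
\item Set $\varphi(x)=\mathbb{E}_x[\int_0^\infty e^{-t-{\sf A}_t}\,{\rm d}t]$. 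It is strictly positive and quasi-continuous.
\item The sets $F_k=\{\varphi\ge 1/k\}$, intersected with an exhausting compact sequence, form a generalized nest.
\item On each $F_k$ the functional $1_{F_k}\cdot{\sf A}$ has bounded 1-potential. So the limit (\ref{3.2}), whose existence follows from the monotonicity argument of Step 1, defines a measure $\mu$ with $\mu^{F_k}\in{\sf S}_{00}$.
\item By Theorem~\ref{thm2.2}, $\mu\in{\sf S}$, and its PCAF is ${\sf A}$ by uniqueness.
\end{itemize}

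The main obstacle I expect is the existence and identification of the limit in (\ref{3.2}) for an arbitrary $\gamma$-excessive $h$ and an arbitrary PCAF. In Step 1 it is computed from (\ref{3.1}); in Step 3 no measure is given in advance, so the limit must be shown to exist directly. I would first handle $h=R_\gamma g$, where the quotient can be rewritten via symmetry as $\int \mathbb{E}_x[\int_0^\infty e^{-\gamma s}f(X_s)\,{\rm d}{\sf A}_s]\,t^{-1}(g-\ldots)\,{\rm d}\mathfrak{m}$ and shown to be monotone in $t$. I would then extend to general $h$ by increasing approximation.
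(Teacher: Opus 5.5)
There is a genuine gap, located in Step~1 and propagating through the rest of your argument. The paper gives no proof of this statement (it is quoted from Fukushima--Oshima--Takeda), so the comparison here is with the standard argument.

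Your architecture matches that argument:
\begin{itemize}
\item reduce to ${\sf S}_{00}$ through a generalized compact nest;
\item splice the functionals ${\sf A}^{\mu^{F_k}}$ along the hitting times of $E\setminus F_k$;
\item get uniqueness from a continuous martingale of finite variation;
\item get smoothness of the Revuz measure from $\varphi(x)=\mathbb{E}_x[\int_0^\infty e^{-t-{\sf A}_t}{\rm d}t]$, where the computation you leave implicit is $\mathbb{E}_x[\int_0^\infty e^{-t}\varphi(X_t)\,{\rm d}{\sf A}_t]\le 1$.
\end{itemize}

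The gap is the sentence ``replacing ${\sf A}$ by $f\cdot{\sf A}$ corresponds to replacing $\mu$ by $f\mu$''. This is exactly the claim $f\cdot{\sf A}^\mu\sim{\sf A}^{f\mu}$, i.e.\ $\mathbb{E}_x[\int_0^\infty e^{-t}f(X_t)\,{\rm d}{\sf A}^\mu_t]=R_1(f\mu)(x)$, and nothing you have established gives it. Relation (\ref{3.1}) and your uniqueness argument control only the single function $U^1_{\sf A}1$. Your symmetry computation moves $P_t$ onto $h$, so it handles weights placed on the starting point (the $h$-slot), never weights placed inside ${\rm d}{\sf A}_s$ (the $f$-slot). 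Invoking uniqueness here is circular: to apply it to $f\cdot{\sf A}^\mu$ you must already know its potential or its Revuz measure, which is the $f$-weighted form of (\ref{3.2}) that Step~1 is supposed to prove.

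The same missing ingredient is needed in two other places. In Step~2 you need to know that $1_{F_k}\cdot{\sf A}^{k+1}$ has Revuz measure $\mu_k$. In the surjectivity part, your computation with $h=R_\gamma g$ yields the limit $(g,U^\gamma_{\sf A}f)_{\mathfrak m}$, and subadditivity in $t$ gives existence of the limits. But you still have to show $(g,U^\gamma_{\sf A}f)_{\mathfrak m}=\int_E fR_\gamma g\,{\rm d}\mu$ for a single measure $\mu$ independent of $g$. That exchange between the $h$-slot and the $f$-slot is the actual content of the theorem, and it is never proved.

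You need an independent argument for it. Two possible routes:
\begin{itemize}
\item Construct ${\sf A}^\mu$ as a limit of absolutely continuous functionals $\int_0^t g_n(X_s)\,{\rm d}s$ with $\rho(g_n\mathfrak m,\mu)\to0$; for these the $f$-weighted identity is trivial. Pass to the limit first for $f\in\mathcal F\cap C_0(E)$, using $fR_1g\in\mathcal F$ so that $\int fR_1g\,{\rm d}(g_n\mathfrak m)=\mathcal E_1(U_1(g_n\mathfrak m),fR_1g)$ converges. Then extend by density in $C_0(E)$ and a monotone class argument.
\item Define the Revuz measure $\mu_{\sf A}$ of an arbitrary PCAF with $f$ inside the limit. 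Prove the $h$/$f$ exchange, for example via symmetry and the fine continuity of excessive functions. Then identify $\mu_{{\sf A}^\mu}=\mu$ from $U_1\mu_{{\sf A}^\mu}=U^1_{{\sf A}^\mu}1=U_1\mu$.
\end{itemize}

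A smaller point: $\{\varphi\ge 1/k\}$ need not be closed. You must also intersect it with a nest on which $\varphi$ is continuous, and remove an exceptional set, before you obtain a generalized compact nest.
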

	The equation (\ref{3.2}) is called the \textit{Revuz correspondence}. In this paper, we call it the \textit{Revuz map} by viewing this correspondence from ${\sf S}$ to $\mathbf{A}_c^+$. Next, we recall a theorem concerning the convergence of PCAF. 
	\begin{theorem}[\cite{Nishimori_Tomisaki_Tsuchida_Uemura_2025} Theorem 4.1]\label{thm3.2}
		Let ${\sf A}^n$, ${\sf A}\in\mathbf{A}_c^+$. Denote by $\mu_n$ and $\mu$ their Revuz measures in ${\sf S}$ for $n\geq 1$. Assume that all $\mu_n$ and $\mu$ belong to ${\sf S}_0$. If $\displaystyle \lim_{n \to \infty} \rho(\mu_n,\mu)=0$, then there exists a subsequence $\{n_k\}$ of $\{ n\}$ such that 
		\[
		\mathbb{P}_x\left(   \lim_{n_k \to \infty}  {\sf A}^{n_k}_t= {\sf A}_t  \text{ locally uniformly in }t \text{ on }[0,+\infty) \right)=1\quad \text{q.e.}\quad x\in E.
		\]
	\end{theorem}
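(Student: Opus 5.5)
The plan is to pass from the PCAFs to their $1$-potentials through a martingale decomposition, and then to control the difference of the potentials along the path by capacities; this follows the classical route of \cite[Lemma 5.1.9, Theorem 5.2.1]{MR2778606}. Put $u_n:=U_1\mu_n$ and $u:=U_1\mu$, taking the quasi continuous $1$-excessive versions $R_1\mu_n$ and $R_1\mu$. Set $w_n:=\widetilde{u}_n-\widetilde u$, so that $\Vert w_n\Vert_{\mathcal{E}_1}=\rho(\mu_n,\mu)\to0$.

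\textbf{Step 1 (discounted decomposition).} Define $B^n_t:=\int_0^t e^{-s}\,{\rm d}{\sf A}^n_s$, and define $B_t$ in the same way from ${\sf A}$. By (\ref{3.1}), which holds q.e. for measures in ${\sf S}_0$, and the strong Markov property, the process
\[
M^n_t:=e^{-t}\widetilde u_n(X_t)-\widetilde u_n(X_0)+B^n_t
\]
is a square integrable martingale additive functional. Its energy is dominated by a constant times $\mathcal{E}_1(u_n,u_n)$; this is essentially the computation behind the Fukushima decomposition of $u_n$. The same holds for $M_t$, built from $u$ and $B$. Subtracting the two decompositions gives
\[
\sup_{t\ge0}|B^n_t-B_t|\le 2\sup_{t\ge0}|w_n(X_t)|+\sup_{t\ge0}|M^n_t-M_t|,
\]
and the energy of $M^n-M$ is controlled by $C\,\mathcal{E}_1(w_n,w_n)$.

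\textbf{Step 2 (estimates and subsequence).} For the first term I use the hitting estimate. If $\sigma_\lambda$ is the hitting time of the open set $\{|w_n|>\lambda\}$ (open up to a small-capacity set, by quasi continuity), then $x\mapsto\mathbb{E}_x[e^{-\sigma_\lambda}]$ is the $1$-equilibrium potential of that set. Consequently, for any $h\in L^2(E;\mathfrak{m})$ with $0<h\le 1$,
\[
\mathbb{P}_{h\mathfrak{m}}\bigl(\sup_{t\le T}|w_n(X_t)|>\lambda\bigr)\le e^{T}\Vert h\Vert_{2}\,{\rm Cap}(\{|w_n|>\lambda\})^{1/2}\le e^T\Vert h\Vert_2\,\lambda^{-1}\Vert w_n\Vert_{\mathcal{E}_1}.
\]
For the second term, Doob's inequality under $\mathbb{P}_{h\mathfrak{m}}$ together with the energy bound gives a bound of the form $C\lambda^{-2}\mathcal{E}_1(w_n,w_n)$. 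Now choose $n_k$ with $\rho(\mu_{n_k},\mu)\le 4^{-k}$ and take $\lambda=2^{-k}$. By Borel--Cantelli, $\sup_t|B^{n_k}_t-B_t|\to0$ $\mathbb{P}_{h\mathfrak{m}}$-a.s.

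\textbf{Step 3 (main obstacle: from $\mathfrak{m}$-a.e. to q.e.).} This upgrade is the delicate point. For the first term the capacity estimate is already pointwise. Indeed, $\sum_k{\rm Cap}(\{|w_{n_k}|>2^{-k}\})<\infty$, so $\{|w_{n_k}|>2^{-k}\}$ eventually fails along the whole path for every starting point outside the exceptional set $\bigcap_j\bigcup_{k\ge j}\{|w_{n_k}|>2^{-k}\}$, which has zero capacity. I will organise the martingale term the same way. The function $x\mapsto\mathbb{P}_x(\Lambda^c)$, where $\Lambda$ is the event of uniform convergence, is shift-invariant in the sense that $\Lambda\supset\theta_s^{-1}\Lambda$ up to the convergence of the first $s$ units of time. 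Using this, $x\mapsto\mathbb{P}_x(\Lambda^c)$ is excessive and vanishes $\mathfrak{m}$-a.e., hence it vanishes q.e. (and in fact everywhere, by \textbf{(AC)}). If this shift argument becomes awkward, the fallback is to apply the capacity bound directly to the excessive function $x\mapsto\mathbb{P}_x(\sup_t|M^{n}_t-M_t|>\lambda)$, as in \cite[Lemma 5.1.9]{MR2778606}.

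\textbf{Step 4 (undo the discount).} Finally, ${\sf A}_t=\int_0^t e^{s}\,{\rm d}B_s=e^tB_t-\int_0^te^sB_s\,{\rm d}s$, and likewise for ${\sf A}^{n_k}$. Hence uniform convergence of $B^{n_k}$ to $B$ on $[0,\infty)$ gives
\[
\sup_{t\le T}|{\sf A}^{n_k}_t-{\sf A}_t|\le 2e^T\sup_t|B^{n_k}_t-B_t|\to0,
\]
which is the asserted locally uniform convergence $\mathbb{P}_x$-a.s. for q.e. $x$.
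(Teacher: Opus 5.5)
\textbf{The gap is in Step 3.} Steps 1, 2 and 4 are sound as far as they go, with two small repairs. First, the discounted process $M^n$ is a martingale but not additive; identify it as $\int_0^t e^{-s}\,{\rm d}M^{[u_n]}_s$, with $M^{[u_n]}$ the Fukushima martingale part, which gives your energy bound. Second, use $\sup_{t\le T}$ throughout, since $\mathbb{E}[e^{-\sigma}]$ only controls $\mathbb{P}(\sigma\le T)$. But these steps give convergence only for $\mathfrak{m}$-a.e. starting point, and Step 3, which carries the q.e.\ claim, does not work.

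\emph{First term.} Knowing that $x$, or even the whole path, avoids the polar set $\bigcap_j\bigcup_{k\ge j}G_k$ with $G_k:=\{|\widetilde w_{n_k}|>2^{-k}\}$ only gives $X_t\notin G_k$ for large $k$ at each fixed $t$. It gives nothing uniformly in $t\in[0,T]$. What you need is $\sum_k\mathbb{P}_x(\sigma_{G_k}\le T)\le e^T\sum_k\mathbb{E}_x[e^{-\sigma_{G_k}}]<\infty$ q.e. This does follow from $\sum_k{\rm Cap}(G_k)^{1/2}<\infty$, because $\sum_k e_{G_k}$ then converges in $\mathcal{E}_1$.

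\emph{Martingale term.} Here the argument fails. Additivity gives only $\Lambda\subset\theta_s^{-1}\Lambda$, hence $P_sf\le f$ for $f(x):=\mathbb{P}_x(\Lambda^c)$, so $f$ is merely supermedian. The property $P_sf\uparrow f$ as $s\downarrow0$ would say that convergence cannot fail ``only at time $0+$''. That is exactly the uniform-in-$k$ control near $t=0$ under $\mathbb{P}_x$ that you have not established.

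Indeed, under \textbf{(AC)} an excessive function vanishing $\mathfrak{m}$-a.e.\ vanishes everywhere, so your argument would prove convergence for every $x\in E$, and that is false. Take three-dimensional Brownian motion, $\mu=0$, and $\mu_n=c_n\sigma_{r_n}$, where $\sigma_r$ is the normalized surface measure on $\{|x|=r\}$, with $c_n^2/r_n\to0$ and $c_n/r_n\to\infty$. Then $\rho(\mu_n,0)^2\le C c_n^2/r_n\to0$. By Brownian scaling, ${\sf A}^n_t$ has the $\mathbb{P}_0$-law of $(c_n/r_n)L_{t/r_n^2}$, where $L$ is the PCAF of $\sigma_1$ and $L_\infty>0$ $\mathbb{P}_0$-a.s. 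Hence ${\sf A}^n_t\to\infty$ in $\mathbb{P}_0$-probability along every subsequence, and $f(0)=1$ although $f=0$ q.e.

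Your fallback has the same defect: $x\mapsto\mathbb{P}_x(\sup_t|M^n_t-M_t|>\lambda)$ is not excessive either. You would have to dominate it by $4\lambda^{-2}R_2\mu_{\langle w_n\rangle}$ and prove a weak-type bound such as ${\rm Cap}(\{R_1\eta>\lambda\})\le\eta(E)/\lambda$ for finite smooth $\eta$. Neither step is in the proposal.

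\textbf{The standard repair.} The paper does not reprove this theorem; it quotes \cite{Nishimori_Tomisaki_Tsuchida_Uemura_2025}. Remark~\ref{rem3.1} indicates the fix: test against every $\nu\in{\sf S}_{00}$ instead of $h\mathfrak{m}$. One has $\mathbb{P}_\nu(\sup_{t\le T}|\widetilde w(X_t)|>\lambda)\le e^T\lambda^{-1}\Vert U_1\nu\Vert_{\mathcal{E}_1}\Vert w\Vert_{\mathcal{E}_1}$, from $\mathbb{E}_\nu[e^{-\sigma_G}]=\mathcal{E}_1(U_1\nu,e_G)$. One also has $\mathbb{E}_\nu[\sup_{t\le T}|M^n_t-M_t|^2]\le 4e^T\langle \nu, R_1\mu_{\langle w_n\rangle}\rangle\le 8e^T\Vert R_1\nu\Vert_\infty\,\mathcal{E}(w_n,w_n)$. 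Your subsequence, chosen with $\rho(\mu_{n_k},\mu)\le 4^{-k}$, does not depend on $\nu$. So Borel--Cantelli gives $\mathbb{P}_\nu(\Lambda^c)=0$ for all $\nu\in{\sf S}_{00}$, and Theorem~\ref{thm2.1} then shows that $\{x:\mathbb{P}_x(\Lambda^c)>0\}$ has zero capacity. This replaces your Step 3 entirely.
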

	\begin{theorem}[\cite{MR4968118} Theorem 3.7]\label{thm3.3}
		For $\mu_n,\mu \in {\sf S}_0$, let ${\sf A}^n,{\sf A} \in \mathbf{A}_c^+$ be the corresponding of PCAFs, respecetively. If $\rho(\mu_n,\mu)\to 0$ as $n\to\infty$, then ${\sf A}^n$ converges to ${\sf A}$ with the local uniform topology in $L^1(\mathbb{P}_x)$ for q.e. $x \in E$, that is, for any $T>0$ and q.e. $x \in E$,
		\[
		\lim_{n \to \infty}\mathbb{E}_x\left[ \sup_{0\leq t \leq T} \left| {\sf A}^n_t - {\sf A}_t  \right|  \right]=0.
		\]
	\end{theorem}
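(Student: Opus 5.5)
The approach is to reduce the convergence of the PCAFs to the convergence of their $1$-potentials through the martingale that links a PCAF to its potential. Take $u_n:=R_1\mu_n$, $u:=R_1\mu$ (quasi continuous, $1$-excessive versions of $U_1\mu_n,U_1\mu$) and $w_n:=u_n-u$, so that $\Vert w_n\Vert_{\mathcal{E}_1}=\rho(\mu_n,\mu)\to0$. For $\nu\in{\sf S}_0$ with PCAF ${\sf A}^\nu$, the process $e^{-t}R_1\nu(X_t)+\int_0^t e^{-s}{\rm d}{\sf A}^\nu_s$ is a $\mathbb{P}_x$-martingale for q.e. $x$; this is the Fukushima decomposition of $R_1\nu$ written in $1$-order form. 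Subtracting the identities for $\mu_n$ and $\mu$ and setting $D^n:={\sf A}^n-{\sf A}$ gives, q.e.,
\[
\int_0^t e^{-s}\,{\rm d}D^n_s = w_n(X_0)-e^{-t}w_n(X_t)+M^{[w_n]}_t ,
\]
where $M^{[w_n]}$ is the martingale additive functional of finite energy attached to $w_n$.

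First, an integration by parts removes the weight $e^{-s}$. Writing $I^n_t:=\int_0^t e^{-s}{\rm d}D^n_s$, we have $D^n_t=e^tI^n_t-\int_0^t e^sI^n_s\,{\rm d}s$, hence
\[
\sup_{t\le T}|D^n_t|\le 2e^T\sup_{t\le T}|I^n_t| .
\]
It therefore suffices to show that, for q.e. $x$,
\[
\mathbb{E}_x\Big[\sup_{t\le T}|w_n(X_t)|\Big]\to0 \quad\text{and}\quad \mathbb{E}_x\Big[\sup_{t\le T}|M^{[w_n]}_t|\Big]\to0 .
\]

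Second, the martingale term is handled with the Burkholder--Davis--Gundy (or Doob) inequality. It gives
\[
\mathbb{E}_x\Big[\sup_{t\le T}|M^{[w_n]}_t|\Big]\le C\,\mathbb{E}_x\big[\langle M^{[w_n]}\rangle_T\big]^{1/2}\le Ce^{T/2}\,R_1\mu_{\langle w_n\rangle}(x)^{1/2},
\]
where $\mu_{\langle w_n\rangle}$ is the Revuz (energy) measure of $\langle M^{[w_n]}\rangle$, and the total mass of $\mu_{\langle w_n\rangle}$ is at most $2\mathcal{E}(w_n,w_n)\to0$.

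Third, for the term $w_n(X_t)$, use the domination $|w_n|\le u_n+u$ and the fact that $e^{-t}|w_n|(X_t)$ is controlled by the $1$-excessive function $e_n:=R_1|\cdot|$-regularization of $w_n$, namely its $1$-reduite. The quantity $e^{-t}e_n(X_t)$ is then a positive supermartingale, and a maximal inequality for positive supermartingales gives
\[
\mathbb{E}_x\Big[\sup_{t\le T}|w_n(X_t)|\Big]\lesssim e^T e_n(x),
\]
where $\Vert e_n\Vert_{\mathcal{E}_1}\le\Vert w_n\Vert_{\mathcal{E}_1}\to0$.

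The main obstacle is the passage from ``the controlling $1$-excessive functions tend to $0$ in $\mathcal{E}_1$ (or in total mass)'' to ``they tend to $0$ at q.e. point for the whole sequence.'' Norm convergence only yields q.e. convergence along a subsequence; this is exactly why Theorem~\ref{thm3.2} is subsequential. My first attempt would be a subsequence--subsequence argument combined with monotonicity. Fix $x$ outside a single exceptional set built from the countably many functions $e_n$ and $R_1\mu_{\langle w_n\rangle}$. Then show that the real sequence $a_n(x):=\mathbb{E}_x[\sup_{t\le T}|D^n_t|]$ has the property that every subsequence has a further subsequence tending to $0$. The difficulty is that the exceptional set produced by each sub-subsequence depends on that subsequence. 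To remove this, I would use the energy identity behind Ooi's argument, which expresses $\mathbb{E}_{\mathfrak{m}+\kappa+\nu_0}$ of the squared quadratic variation of $M^{[w_n]}$ through $\mathcal{E}_1(w_n,w_n)$. From it I would try to show that $x\mapsto a_n(x)$ is dominated by a $1$-excessive function with $\mathcal{E}_1$-norm $O(\rho(\mu_n,\mu))$, and then use the quasi continuity of these excessive majorants together with Theorem~\ref{thm2.1} to pin down a common exceptional set. If this fails, the fallback is to first establish convergence in $L^1(\mathbb{P}_{\mathfrak{m}+\kappa+\nu_0})$, which follows directly from the estimates above. I would then transfer it to q.e. starting points through the Markov property at a small time $\varepsilon>0$ under \textbf{(AC)}, since $P_\varepsilon(x,\cdot)\ll\mathfrak{m}$, and finally let $\varepsilon\downarrow0$ using the continuity of the PCAFs.
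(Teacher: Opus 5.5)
The paper gives no proof of this statement; it is simply quoted from Ooi. So there is no route to compare with, and the real issue is the statement your plan aims at. The step you flag as the main obstacle is a genuine gap, and it cannot be closed: for the full sequence, pointwise at q.e.\ starting point, the conclusion is false in general.

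\textbf{Counterexample.} Take planar Brownian motion, for which $R_1(x,y)=\pi^{-1}\log(1/|x-y|)+O(1)$ near the diagonal. Let $\sigma_n$ be normalized arc length on $\partial B(a_n,r_n)$, and set $\mu_n:=c_n\sigma_n$ with $c_n:=1/\log(1/r_n)$.
\begin{itemize}
\item $\mu_n\in{\sf S}_{00}$ and $\sup_E R_1\mu_n\le\pi^{-1}(1+Cc_n)$.
\item Hence $\rho(\mu_n,0)^2=\int R_1\mu_n\,{\rm d}\mu_n\le Cc_n\to0$ as $r_n\to0$.
\item Yet $R_1\mu_n\ge(2\pi)^{-1}-Cc_n$ on $B(a_n,\sqrt{r_n})$.
\end{itemize}
In the $k$-th block of indices, let $r_n=4^{-k}$ and let $a_n$ run through $2^{-k}\mathbb{Z}^2\cap[0,1]^2$. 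Then every $x\in[0,1]^2$ lies in $B(a_n,\sqrt{r_n})$ for infinitely many $n$. Take $\mu=0$, ${\sf A}=0$. Use (\ref{3.1}) at every $x$, the bound $\int_0^Te^{-t}{\rm d}{\sf A}^n_t\le{\sf A}^n_T$, and $\Vert R_1\mu_n\Vert_{L^2}\le\rho(\mu_n,0)$. This gives
\[
\mathbb{E}_x\Bigl[\sup_{t\le T}|{\sf A}^n_t-{\sf A}_t|\Bigr]=\mathbb{E}_x[{\sf A}^n_T]\ge R_1\mu_n(x)-e^{-T}P_TR_1\mu_n(x)\ge R_1\mu_n(x)-(4\pi T)^{-1/2}\rho(\mu_n,0).
\]
So $\limsup_n\mathbb{E}_x[\sup_{t\le T}|{\sf A}^n_t-{\sf A}_t|]\ge(2\pi)^{-1}$ on all of $[0,1]^2$, which has positive capacity (even positive Lebesgue measure).

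Consequently none of your devices can produce a common exceptional set. Quasi continuous majorants with small $\mathcal{E}_1$-norm only give q.e.\ convergence along subsequences. In your fallback, the interval $[\varepsilon,T]$ is harmless, since $P_\varepsilon R_1\mu_n(x)\to0$. But $\limsup_n\mathbb{E}_x[{\sf A}^n_\varepsilon]\ge(2\pi)^{-1}$ for every $\varepsilon>0$, so the limits $\varepsilon\downarrow0$ and $n\to\infty$ cannot be interchanged.

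\textbf{Two intermediate steps are also wrong as written.}
\begin{itemize}
\item The maximal inequality for the positive supermartingale $e^{-t}e_n(X_t)$ is only of weak type: $\mathbb{P}_x(\sup_t e^{-t}e_n(X_t)\ge\lambda)\le e_n(x)/\lambda$. It does not give $\mathbb{E}_x[\sup_{t\le T}|w_n(X_t)|]\le Ce^Te_n(x)$, because Doob's inequality fails in $L^1$.
\item $\mu_{\langle w_n\rangle}(E)\to0$ does not imply $R_1\mu_{\langle w_n\rangle}(x)\to0$, since the kernel is unbounded.
\end{itemize}

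\textbf{What your estimates do prove.} Integrate the starting point against $\nu\in{\sf S}_{00}$, using $\int R_1\mu_{\langle w_n\rangle}\,{\rm d}\nu\le\Vert R_1\nu\Vert_\infty\,\mu_{\langle w_n\rangle}(E)$ and $\int e_n\,{\rm d}\nu=\mathcal{E}_1(e_n,U_1\nu)$. This yields convergence in $\mathbb{P}_\nu$-probability, uniformly on $[0,T]$, which is the form actually used in Remark~\ref{rem3.1}. By Borel--Cantelli it then gives the subsequential q.e.\ statement of Theorem~\ref{thm3.2}.

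A full-sequence statement at every $x$ needs a stronger hypothesis, such as $\Vert R_1\mu_n-R_1\mu\Vert_\infty\to0$. Under that hypothesis your decomposition is exactly the paper's proof of Theorem~\ref{thm3.4}, where both troublesome terms are bounded trivially by the sup-norm.
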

	\begin{remark}\label{rem3.1}
		From Theorem~\ref{thm3.3}, for any $\nu \in {\sf S}_{00}$ and $T,\delta>0$, there exists positive constant $C_\nu$ depending on $\nu$,
		\[
		\mathbb{P}_\nu \left(    \sup_{0\leq t \leq T}\left|  {\sf A}_t^n -{\sf A}_t \right|\geq \delta   \right)\leq C_\nu \sqrt{\mathcal{E}_1(U_1 \mu_n-U_1\mu, U_1 \mu_n-U_1\mu)}.
		\]
		So, by using Borel-Cantelli's lemma, we can take a subsequence $\{n_k\} \subset \{n\}$ independent of $\nu$ such that 
		\[
		\mathbb{P}_x \left(  \lim_{n_k\to\infty}  \sup_{0\leq t \leq T}\left|  {\sf A}_t^{n_k}-{\sf A}_t \right|=0 \text{ for any T}  \right)=1,
		\]
		for any $T>0$ and q.e. $x\in E$ (see \cite[Remark 3.9]{MR4968118}).
	\end{remark}
	Through the  equation (\ref{3.1}), the conclusion of the preceding theorem can be extended to all points $x \in E$. By revisiting its proof, we obtain the following two propositions.
    \begin{lemma}[cf. \cite{MR2778606} Lemma 5.4.1]\label{lem3.1}
		For $\mu \in {\sf S}_{00}$ corresponding to ${\sf A} \in \mathbf{A}_c^+$ and $\alpha>0$,  there exists an martingale additive functional $M^{[ R_\alpha\mu]}$ such that, for $t \geq 0$ and all $x \in E$,
		\[
		R_\alpha \mu(X_t)- R_\alpha \mu(X_0)= M_t^{[R_1\mu]}+\alpha \int_0^t R_\alpha \mu(X_s)\ {\rm d}s -{\sf A}_t,\quad\mathbb{P}_x\text{-a.e..}
		\]
	\end{lemma}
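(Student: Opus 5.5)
The plan is to follow the proof of \cite[Lemma 5.4.1]{MR2778606}, with one change: the quasi-everywhere statements there become statements for every $x\in E$. This is possible because, for $\mu\in{\sf S}_{00}$, the identity (\ref{3.1}) holds for all $x$, and $R_\alpha\mu$ is a genuine $\alpha$-excessive function defined everywhere. First, the resolvent equation (equivalently, the $\alpha$-analogue of (\ref{3.1})) gives, for every $x\in E$,
\[
R_\alpha\mu(x)=\mathbb{E}_x\left[\int_0^\infty e^{-\alpha t}\,{\rm d}{\sf A}_t\right].
\]
This value is finite, since $\Vert R_\alpha\mu\Vert_\infty<\infty$ for $\alpha>0$ (the remark after (\ref{2.4})).

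Next I define the candidate martingale directly by
\[
M_t:=R_\alpha\mu(X_t)-R_\alpha\mu(X_0)-\alpha\int_0^t R_\alpha\mu(X_s)\,{\rm d}s+{\sf A}_t .
\]
It is additive because ${\sf A}$ is additive and the other terms are plainly additive. To show it is a $\mathbb{P}_x$-martingale for every $x$, I compute conditional expectations with the Markov property. Put $\Phi:=\int_0^\infty e^{-\alpha s}{\rm d}{\sf A}_s$. Then
\[
\mathbb{E}_x[\Phi\mid\mathscr{F}_t]=\int_0^t e^{-\alpha s}{\rm d}{\sf A}_s+e^{-\alpha t}R_\alpha\mu(X_t).
\]
So
\[
N_t:=e^{-\alpha t}R_\alpha\mu(X_t)+\int_0^t e^{-\alpha s}{\rm d}{\sf A}_s
\]
is a bounded-in-$L^1$ (indeed $L^2$) $\mathbb{P}_x$-martingale for each $x$. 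Integration by parts with $e^{\alpha t}$ then gives $M_t=\int_0^t e^{\alpha s}\,{\rm d}N_s$, hence $M$ is a local martingale. Boundedness of $R_\alpha\mu$, together with $\mathbb{E}_x[{\sf A}_t^2]\le C_t$, which follows from the standard energy bound $\mathbb{E}_x[{\sf A}_t^2]\le 2\mathbb{E}_x\int_0^t \mathbb{E}_{X_s}[{\sf A}_t]\,{\rm d}{\sf A}_s\le 2e^{2t}\Vert R_1\mu\Vert_\infty^2$, gives square integrability. This yields a square-integrable martingale additive functional $M^{[R_\alpha\mu]}$, and the displayed decomposition holds by construction.

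The main obstacle is regularity of paths. The expression $R_\alpha\mu(X_t)$ must be right-continuous, and $M$ must be an additive functional with an exceptional set of the admissible type, and all of this must hold for every starting point $x$, not just q.e. To handle this I would use that $R_\alpha\mu$ is $\alpha$-excessive and finite everywhere, so $t\mapsto R_\alpha\mu(X_t)$ is $\mathbb{P}_x$-a.s. right-continuous with left limits for every $x$ (a standard property of excessive functions for Hunt processes). Continuity of ${\sf A}$ then makes $N$ a càdlàg martingale. The exceptional set $\Lambda$ can be chosen shift-invariant by intersecting with the defining set of ${\sf A}$. Under \textbf{(AC)}, any $\mathbb{P}_x$-null event defined after time $s>0$ is null for all $x$, which upgrades "q.e." to "all $x$". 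The remaining identification, that $M$ coincides with the Fukushima martingale part from \cite[Lemma 5.4.1]{MR2778606}, is not needed, since the statement only asks for existence.
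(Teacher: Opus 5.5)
Your proposal is correct and follows essentially the same route as the paper. Both arguments use the Markov property and the identity $R_\alpha\mu(x)=\mathbb{E}_x[\int_0^\infty e^{-\alpha t}\,{\rm d}{\sf A}_t]$ for every $x$ to show that $N_t=e^{-\alpha t}R_\alpha\mu(X_t)+\int_0^t e^{-\alpha s}\,{\rm d}{\sf A}_s$ is a martingale, and both identify $M$ with $\int_0^t e^{\alpha s}\,{\rm d}N_s$; you define $M$ by the formula and recover this via integration by parts, while the paper defines the stochastic integral and derives the formula, and your extra checks (the $\alpha$-version of (\ref{3.1}) via the resolvent equation, square integrability, right-continuity of $R_\alpha\mu(X_t)$) supply details the paper leaves implicit.
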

	\begin{proof}
		For any $\alpha>0$ and $t \geq 0$, let $u:=R_\alpha \mu$ for $\mu \in {\sf S}_{00}$. Define $N_t$ by
		\[
		N_t^{[u]}:= e^{-\alpha t}u(X_t)+\int_{0}^{t}e^{-\alpha r} {\rm d}{\sf A}_r.
		\]
		First, we will show that $N^{[u]}_t$ is $\mathbb{P}_x$-martingale for all $x \in E$. For any  fixed $0 \leq s \leq t$, set $h=s-t$, then
		\[
		\begin{aligned}
			\mathbb{E}_x\left[\left.    e^{-\alpha t} u (X_t) +\int_s^t e^{-\alpha r}\ {\rm d}{\sf A}_r \ \right|\ \mathscr{F}_s \right]&=\mathbb{E}_x\left[\left.    e^{-\alpha(s+h)} u(X_{s+h}) +e^{-\alpha s}\int_0^h e^{-\alpha r}\ {\rm d}{\sf A}_r\circ\theta_s  \ \right|\ \mathscr{F}_s \right]\\
			&=e^{-\alpha s}\mathbb{E}_{X_s}\left[   e^{-\alpha h} u(X_h)+\int_0^h e^{-\alpha r}\ {\rm d}{\sf A}_r  \right]\\
			&=e^{-\alpha s}\mathbb{E}_{X_s}\left[   e^{-\alpha h} \mathbb{E}_{X_h}\left[ \int_{0}^{\infty}e^{-\alpha r}\ {\rm d}{\sf A}_r \right]+\int_0^h e^{-\alpha r}\ {\rm d}{\sf A}_r  \right]\\
			&=e^{-\alpha s}\mathbb{E}_{X_s}\left[    \mathbb{E}_{X_s}\left[\left. \int_{h}^{\infty}e^{-\alpha r}\ {\rm d}{\sf A}_r\ \right|\ \mathscr{F}_h\right]+\int_0^h e^{-\alpha r}\ {\rm d}{\sf A}_r  \right]\\
			&=e^{-\alpha s}\mathbb{E}_{X_s}\left[   \int_h^\infty e^{-\alpha r}\ {\rm d}{\sf A}_r  +\int_0^h e^{-\alpha r}\ {\rm d}{\sf A}_r  \right]\\
			&=e^{-\alpha s}u(X_s).
		\end{aligned}
		\]
		Hence, 
		\[
		\begin{aligned}
			\mathbb{E}_x\left[\left.N^{[u]}_t \ \right|\  \mathscr{F}_s\right]&=\mathbb{E}_x\left[\left.   e^{-\alpha t}u(X_t)+\int_{0}^{s}e^{-\alpha r} {\rm d}{\sf A}_r+   \int_{s}^{t}e^{-\alpha r} {\rm d}{\sf A}_r \ \right|\ \mathscr{F}_s  \right]\\
			&=\int_{0}^{s}e^{-\alpha r} {\rm d}{\sf A}_r+ \mathbb{E}_x\left[\left.   e^{-\alpha t}u(X_t)+   \int_{s}^{t}e^{-\alpha r} {\rm d}{\sf A}_r  \ \right|\ \mathscr{F}_s \right]\\
			&=\int_{0}^{s}e^{-\alpha r} {\rm d}{\sf A}_r+e^{-\alpha s}u(X_s)\\
			&=N_s^{[u]}.
		\end{aligned}
		\]
		This is show that $N^{[u]}_t$ is $\mathbb{P}_x$-martingale for all $x \in E$. Let $\displaystyle M_t^{[u]}:=\int_0^t e^{\alpha r}{\rm d}N^{[u]}_r$ be a $\mathbb{P}_x$-martingale for all $x \in E$. Then,
		\[
		{\rm d}N^{[u]}_t={\rm d}\left( e^{-\alpha t}u(X_t) \right)+e^{-\alpha t}{\rm d}{\sf A}_r=e^{-\alpha t}{\rm d}u(X_t)-\alpha e^{-\alpha t}u(X_t){\rm d}t+e^{-\alpha t}{\rm d}{\sf A}_r.
		\]
		Multiply both sides by $e^{\alpha t}$, we can have
		\[
		e^{\alpha t}{\rm d}N^{[u]}_t={\rm d}u(X_t)-\alpha u(X_t){\rm d}t+{\rm d}{\sf A}_r
		\]
		Since the left side of the equation above equals ${\rm d}M_t^{[u]}$, integrating both sides yields
		\[
		M_t^{[u]}=u(X_t)-u(X_0)-\alpha\int_0^t u(X_s)\ {\rm d}s +{\sf A}_t.
		\]
		For any $s,t \geq 0$, $\alpha>0$ and all $x \in E$,
		\[
		\begin{aligned}
			M_t^{[u]}+M_s^{[u]}\circ\theta_t&=u(X_t)-u(X_0)+(u(X_s))\circ \theta_t-(u(X_0))\circ\theta_t\\
			&-\alpha\int_0^t u(X_r)\ {\rm d}r-\left(\int_0^s u(X_r)\ {\rm d}r\right)\circ\theta_t+{\sf A}_t +{\sf A}_s\circ\theta_s\\
			&=u(X_{t+s})- u(X_0) -\alpha \int_{0}^{t+s} u(X_r)\ {\rm d}r +{\sf A}_{t+s}\\
			&=M_{t+s}^{[u]}\quad \mathbb{P}_x\text{a.s.}
		\end{aligned}
		\]
		Therefore, $M_t^{[u]}$ is an martingale additive functional.
	\end{proof}

	\begin{theorem}\label{thm3.4}
		For $\mu_n,\mu \in \mathcal{S}_{00}$, let ${\sf A}^n,{\sf A}\in\mathbf{A}_c^+$ be the corresponding PCAFs, respectively. If ,$R_1 \mu_n $ is converges to $R_1 \mu$ uniformly on $E$, then for all $x \in E$ and any $T>0$,
		\[
		\lim_{n\to \infty} \mathbb{E}_x\left[   \sup_{0\leq t \leq T} \left|  {\sf A}_t^n -{\sf A}_t\right|  \right]=0.
		\]
	\end{theorem}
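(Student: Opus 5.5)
We use the decomposition of Lemma~\ref{lem3.1} with $\alpha=1$, applied to $\mu_n$ and to $\mu$, and subtract. Set $u_n:=R_1\mu_n-R_1\mu$ and $M^n:=M^{[R_1\mu_n]}-M^{[R_1\mu]}$. For every $x\in E$, $\mathbb{P}_x$-a.s.,
\[
{\sf A}^n_t-{\sf A}_t = M^n_t + \int_0^t u_n(X_s)\,{\rm d}s - u_n(X_t)+u_n(X_0).
\]
Put $\varepsilon_n:=\Vert u_n\Vert_\infty\to 0$. The last three terms are bounded, uniformly in $t\in[0,T]$, by $(T+2)\varepsilon_n$. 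Hence
\[
\mathbb{E}_x\Bigl[\sup_{0\le t\le T}|{\sf A}^n_t-{\sf A}_t|\Bigr]\le (T+2)\varepsilon_n+\mathbb{E}_x\Bigl[\sup_{0\le t\le T}|M^n_t|\Bigr],
\]
and everything reduces to the martingale term. Since the lemma holds for every $x\in E$ (this uses $\mu_n,\mu\in{\sf S}_{00}$ and the identity \eqref{3.1} valid for all $x$), the bound is pointwise in $x$, not merely quasi-everywhere.

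For the martingale term we use Doob's $L^2$ inequality: $\mathbb{E}_x[\sup_{t\le T}|M^n_t|]\le 2\,\mathbb{E}_x[(M^n_T)^2]^{1/2}$. By the decomposition, $M^n$ is square integrable, because $u_n$ is bounded and ${\sf A}^n_T,{\sf A}_T\in L^2(\mathbb{P}_x)$. The latter holds since for a PCAF with bounded $1$-potential one has $\mathbb{E}_x[({\sf A}_T)^2]\le 2e^{2T}\Vert R_1\mu\Vert_\infty^2$ by the standard energy identity.

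To estimate $\mathbb{E}_x[(M^n_T)^2]$, we apply Itô's formula to $u_n(X_t)^2$, or equivalently expand the square of $u_n(X_T)-u_n(X_0)-\int_0^Tu_n(X_s)\,{\rm d}s+{\sf A}^n_T-{\sf A}_T$. The key identity is
\[
\mathbb{E}_x[(M^n_T)^2]=\mathbb{E}_x\Bigl[u_n(X_T)^2-u_n(X_0)^2-2\int_0^T u_n(X_{s})\,u_n(X_s)\,{\rm d}s+2\int_0^T u_n(X_{s})\,{\rm d}({\sf A}^n-{\sf A})_s\Bigr],
\]
which comes from the product rule for the semimartingale $u_n(X_t)$, whose finite-variation part is continuous. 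The jumps of $u_n(X)$ are carried by the martingale part, so $[u_n(X)]_T=[M^n]_T$ and $\mathbb{E}[M^n_T{}^2]=\mathbb{E}[[M^n]_T]$.

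Each term on the right is bounded in absolute value by a constant times $\varepsilon_n$:
\[
\mathbb{E}_x[(M^n_T)^2]\le 2\varepsilon_n^2+2T\varepsilon_n^2+2\varepsilon_n\,\mathbb{E}_x[{\sf A}^n_T+{\sf A}_T].
\]
Moreover $\mathbb{E}_x[{\sf A}^n_T]\le e^T R_1\mu_n(x)\le e^T(\Vert R_1\mu\Vert_\infty+\varepsilon_n)$, which is bounded uniformly in $n$. Hence $\mathbb{E}_x[(M^n_T)^2]\to0$, and the result follows.

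The main obstacle is justifying this energy identity rigorously, given that $u_n(X)$ may jump (the process is a general Hunt process). I would first try the discrete route: partition $[0,T]$, write $\mathbb{E}_x[(M^n_T)^2]=\sum\mathbb{E}_x[(M^n_{t_{i+1}}-M^n_{t_i})^2]$ by orthogonality of increments, and express each increment via the decomposition. Passing to the limit uses quasi-left continuity and right continuity of $u_n(X_t)$, which holds because $R_1\mu_n$ is $1$-excessive and hence finely continuous. If that becomes delicate, the fallback is the cruder estimate $\mathbb{E}_x[(M^n_T)^2]\le 2\,\mathbb{E}_x[\int_0^T u_n(X_{s-})\,{\rm d}({\sf A}^n-{\sf A})_s]+C\varepsilon_n^2$, which still yields the order $\varepsilon_n$ bound.
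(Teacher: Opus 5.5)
Your argument is correct. It follows the paper's skeleton: the decomposition from Lemma~\ref{lem3.1}, the bound $(T+2)\varepsilon_n$ on the three non-martingale terms, and Doob's inequality. Where you differ is in how you control $\mathbb{E}_x[(M^n_T)^2]$, and here your route is genuinely different.

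\textbf{The paper's route.} It passes to the predictable bracket and writes $\mathbb{E}_x[\langle M^n\rangle_T]\le e^T\int_E R_1(x,y)\,\mu_{\langle\Delta^n\rangle}({\rm d}y)$, with $\Delta^n=u_n$, using that the energy measure is the Revuz measure of $\langle M^n\rangle$. It then asserts $R_1\mu_{\langle\Delta^n\rangle}\le 2\varepsilon_n R_1(\mu_n+\mu)$ on the strength of the total-mass estimate $\mu_{\langle\Delta^n\rangle}(E)\le 2\varepsilon_n(\mu_n+\mu)(E)$. That step has two weaknesses. A comparison of total masses does not by itself compare the two potentials at a given point $x$. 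Also, the Revuz identity for $\langle M^n\rangle$, as cited, is a priori only quasi-everywhere.

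\textbf{Your route.} Your product-rule computation avoids energy measures altogether, is self-contained, and holds at every $x$. In fact, running it with the weight $e^{-t}$ on $[0,\infty)$ instead of on $[0,T]$ gives
\[
\begin{aligned}
\mathbb{E}_x\Bigl[\int_0^\infty e^{-t}\,{\rm d}[M^n]_t\Bigr]&=-u_n(x)^2-\mathbb{E}_x\Bigl[\int_0^\infty e^{-t}u_n(X_t)^2\,{\rm d}t\Bigr]+2\,\mathbb{E}_x\Bigl[\int_0^\infty e^{-t}u_n(X_t)\,{\rm d}({\sf A}^n-{\sf A})_t\Bigr]\\
&\le 2\varepsilon_n\bigl(R_1\mu_n+R_1\mu\bigr)(x).
\end{aligned}
\]
This is precisely the justification the paper's final inequality needs. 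So the paper's route buys brevity by citing energy-measure theory, while yours actually establishes the estimate, pointwise in every $x$.

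\textbf{The obstacle you flag is not real.} The It\^o product formula holds for arbitrary c\`adl\`ag semimartingales, so jumps of $u_n(X)$ cause no difficulty. Writing $Y_t:=u_n(X_t)$, you only need two things:
\begin{itemize}
\item[(a)] $t\mapsto Y_t$ is right-continuous. This holds because $R_1\mu_n$ and $R_1\mu$ are bounded $1$-excessive functions, and it upgrades the fixed-$t$ identity of Lemma~\ref{lem3.1} to indistinguishability.
\item[(b)] $\int_0^{\cdot}Y_{s-}\,{\rm d}M^n_s$ is a true $L^2$-martingale. This holds because $Y$ is bounded and $M^n$ is square integrable.
\end{itemize}
Replacing $Y_{s-}$ by $Y_s$ in the ${\rm d}s$ and ${\rm d}{\sf A}^n$, ${\rm d}{\sf A}$ integrals is harmless, since these measures have no atoms. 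The partition argument and the fallback estimate are therefore unnecessary.
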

	\begin{proof}
		From Lemma~\ref{lem3.1}, there exists $M^{[R_1\mu]}$ such that for all $x \in E$ and $t \geq 0$,
		\[
		R_1 \mu(X_t)- R_1 \mu(X_0)= M_t^{[R_1\mu]}+ \int_0^t R_1 \mu(X_s)\ {\rm d}s -{\sf A}_t,\quad\mathbb{P}_x\text{-a.s.}
		\]
		Let $\Delta^n:=R_1 (\mu_n- \mu)$ and $M^{[\Delta^n]}_t:=M_t^{[R_1\mu_n]}-M_t^{[R_1\mu]}$ for any $t\geq 0$. Then, we have
		\[
		{\sf A}^n_t - {\sf A}_t=\Delta^n(X_t) - \Delta^n(X_0) -\int_0^t \Delta^n(X_s)\ {\rm d}s-M^{[\Delta^n]}_t.
		\]
		Hence, 
		\[	
			\begin{aligned}
				\mathbb{E}_x\left[\sup_{0\leq t \leq T} \left|  {\sf A}^n_t - {\sf A}_t\right| \right]&\leq \mathbb{E}_x\left[\sup_{0\leq t \leq T} |\Delta^n(X_t)|\right] +|\Delta^n(X_0)|+\mathbb{E}_x\left[\int_0^T |\Delta^n(X_s)|\ {\rm d}s\right]+\mathbb{E}_x\left[\sup_{0\leq t \leq T} \left|M_t^{[\Delta^n]}\right|\right]\\
				&=:{\rm I_1+I_2+I_3+I_4}.
			\end{aligned}
		\]
		Next, we'll check these four items one by one. Let's start with the first one.
		\[
		{\rm I_1}=\mathbb{E}_x\left[ \sup_{0\leq t \leq T} \left|  R_1 \mu(X_t)- R_1 \mu_n(X_t)\right| \right]\leq \Vert R_1 \mu - R_1 \mu_n \Vert_\infty\to0\text{ as }n \to \infty.
		\]
		Silimlarly,
		\[
		{\rm I_2}= |R_1 \mu(X_0) - R_1 \mu_n(X_0)|  \leq \Vert R_1 \mu - R_1 \mu_n \Vert_\infty\to0\text{ as }n \to \infty.
		\]
		Through the definition of the transition kernel and the Markov property, we can obtain
		\[
		\begin{aligned}
			{\rm I_3}&=\int_0^T \mathbb{E}_x\left[\left|\Delta^n_s\right|\right]\ {\rm d}s\\
			&=\int_0^T P_s\left( |R_1 \mu_n(X_s) - R_1 \mu(X_s) | \right)\ {\rm d}s\\
			&\leq \Vert R_1 \mu_n - R_1\mu \Vert_\infty \int_0^T P_s1(X_s)\ {\rm d}s\\
			&\leq T\cdot \Vert R_1 \mu_n - R_1\mu \Vert_\infty \to 0\quad \text{as}\quad n\to \infty.
		\end{aligned}
		\]
		Finally, From Doob's inequality and \cite[Theorem 5.2.3]{MR2778606}, we have
		\[
		\begin{aligned}
			\mathbb{E}_x\left[\sup_{0\leq t \leq T} \left(M^{[\Delta^n]}_t\right)^2\right]&\leq 4\cdot\mathbb{E}_x\left[  \left(M_T^{[\Delta^n]}\right)^2 \right]\\
			&=4\cdot\mathbb{E}_x\left[  \langle  M^{[\Delta^n]} \rangle_T   \right]\\
			&\leq 4e^{T}\mathbb{E}_x\left[  \int_0^\infty e^{-t}\ {\rm d} \langle M^{[\Delta^n]} \rangle_t \right]\\
			&=4e^{T}\int_E R_1(x,y)\ \mu_{ \langle\Delta^n \rangle}({\rm d}y),
		\end{aligned}
		\]
		where $\mu_{ \langle\Delta^n \rangle}$ is energy measure. Since
		\[
		\mu_{ \langle\Delta^n \rangle}(E) =2\mathcal{E}(\Delta^n,\Delta^n)\leq2\mathcal{E}_1(\Delta^n,\Delta^n)\leq2\Vert R_1\mu_n-R_1\mu\Vert_\infty\cdot\left( \mu_n+\mu \right)(E),
		\]
		then, we have
		\[
		\begin{aligned}
			\int_E R_1(x,y)\ \mu_{ \langle\Delta^n \rangle}({\rm d}y) &\leq 2\Vert R_1 \mu_n-R_1\mu \Vert_\infty \int_E R_1(x,y)\ (\mu_n+\mu)({\rm d}y)\\
			&\leq2 \Vert R_1 \mu_n-R_1\mu \Vert_\infty\cdot\left( R_1\mu_n+R_1\mu  \right)\\
			&\to 0\quad\text{as} \quad n\to\infty.
		\end{aligned}
		\]
		Therefore, $I_4 \to 0$ as $n\to\infty$, so we have completed the proof of the theorem.

	\end{proof}

	\begin{remark}\label{rem3.2}
		From Theorem~\ref{thm3.4} and Markov's inequality, for any $T,\delta>0$, there exists a constant $C_{T,\delta}$ such that 
		\[
		\mathbb{P}_x \left(    \sup_{0\leq t \leq T}\left|  {\sf A}_t^n -{\sf A}_t \right|\geq \delta   \right)\leq C_{T,\delta}\cdot\Vert R_1\mu_n-R_1\mu\Vert_\infty.
		\]
		So, by using Borel-Cantelli's lemma, we can take a subsequence $\{n_k\} \subset \{n\}$ independent of $\nu$ such that 
		\[
		\mathbb{P}_x \left(  \lim_{n_k\to\infty}  \sup_{0\leq t \leq T}\left|  {\sf A}_t^{n_k}-{\sf A}_t \right| =0\text{ for any T}  \right)=1,
		\]
		for any $T>0$ and every $x\in E$.
	\end{remark}

	\section{Main Theorem}
	\begin{theorem}\label{thm4.1}
		Let ${\sf A}^n,{\sf A}\in\mathbf{A}_c^+$. Denote by $\mu_n$, $\mu$ their Revuz measures in ${\sf S}_0$ for each $n$. Assume that $\mu \in {\sf S}_{D}$ and $\{\mu_n\}$ is of uniformly Dynkin. If
		\begin{equation}\label{4.1}
		\mathbb{P}_x\left(   \lim_{n \to \infty}  {\sf A}^{n}_t= {\sf A}_t  \text{ locally uniformly in }t \text{ on }[0,+\infty) \right)=1\quad \text{q.e.}\quad x\in E,
		\end{equation}
		then $\lim_{n \to \infty} R_1 \mu_n(x)= R_1 \mu(x)$ for q.e. $x \in E$
	\end{theorem}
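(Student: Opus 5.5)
Our plan is to pass from almost sure convergence of the PCAFs to convergence of their expected discounted integrals, using uniform integrability supplied by the uniform Dynkin bound. By the Revuz correspondence (\ref{3.1}), valid for q.e.\ $x$ when the measures lie in ${\sf S}_0$, we have
\[
R_1\mu_n(x)=\mathbb{E}_x\left[\int_0^\infty e^{-t}\,{\rm d}{\sf A}^n_t\right],\qquad R_1\mu(x)=\mathbb{E}_x\left[\int_0^\infty e^{-t}\,{\rm d}{\sf A}_t\right]
\]
for q.e.\ $x$. We fix $x$ outside the countable union of exceptional sets, which is still of zero capacity. Such an $x$ satisfies these identities for all $n$, and (\ref{4.1}) also holds at $x$. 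Integration by parts gives
\[
\int_0^\infty e^{-t}\,{\rm d}{\sf A}_t=\int_0^\infty e^{-t}{\sf A}_t\,{\rm d}t .
\]
This identity holds because $e^{-t}{\sf A}_t\to0$ a.s.; that in turn follows from $\mathbb{E}_x[{\sf A}_t]\le e^{t}R_1\mu(x)$ together with the additivity bound below. So it suffices to show
\[
\mathbb{E}_x\left[\int_0^\infty e^{-t}{\sf A}^n_t\,{\rm d}t\right]\to\mathbb{E}_x\left[\int_0^\infty e^{-t}{\sf A}_t\,{\rm d}t\right].
\]

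Step 1 is a truncation. Split the integral at a time $T$. For the tail we use the Markov property:
\[
\mathbb{E}_x\left[\int_T^\infty e^{-t}\,{\rm d}{\sf A}^n_t\right]=e^{-T}\mathbb{E}_x\left[R_1\mu_n(X_T)\right]\le e^{-T}\sup_n\Vert R_1\mu_n\Vert_\infty .
\]
This is uniformly small in $n$ by the uniform Dynkin assumption, and the same bound holds for ${\sf A}$ since $\mu\in{\sf S}_D$. Strictly, $R_1\mu_n(X_T)$ needs the identity at $X_T$, not just at $x$. By \textbf{(AC)}, $X_T$ avoids polar/zero-capacity sets $\mathbb{P}_x$-a.s. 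Alternatively one simply uses that $\mathbb{E}_{X_T}[\int_0^\infty e^{-t}\,{\rm d}{\sf A}^n_t]$ is an $\mathfrak m$-a.e.\ bounded excessive function dominated by $\Vert R_1\mu_n\Vert_\infty$, and uses \textbf{(AC)} to transfer the bound to $P_T(x,\cdot)$.

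Step 2 handles the finite window. On $[0,T]$, (\ref{4.1}) gives $\sup_{t\le T}|{\sf A}^n_t-{\sf A}_t|\to0$ $\mathbb{P}_x$-a.s. Hence $\int_0^T e^{-t}\,{\rm d}{\sf A}^n_t\to\int_0^T e^{-t}\,{\rm d}{\sf A}_t$ a.s.; concretely, integrate by parts to $e^{-T}{\sf A}^n_T+\int_0^Te^{-t}{\sf A}^n_t\,{\rm d}t$. To exchange limit and expectation we need uniform integrability of $\{{\sf A}^n_T\}_n$ under $\mathbb{P}_x$. The plan is to bound second moments uniformly by the standard Dynkin/Khas'minskii-type estimate. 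By additivity and the strong Markov property,
\[
\mathbb{E}_x[({\sf A}^n_T)^2]\le 2\,\mathbb{E}_x\left[\int_0^T \mathbb{E}_{X_s}[{\sf A}^n_T]\,{\rm d}{\sf A}^n_s\right]\le 2\left(\sup_y\mathbb{E}_y[{\sf A}^n_T]\right)^2 .
\]
Moreover $\sup_y\mathbb{E}_y[{\sf A}^n_T]\le e^{T}\sup_y\mathbb{E}_y[\int_0^\infty e^{-t}\,{\rm d}{\sf A}^n_t]\le e^T\sup_n\Vert R_1\mu_n\Vert_\infty$. This bound is uniform in $n$, so $L^2$-boundedness gives uniform integrability. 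Vitali's theorem then yields the convergence of the truncated expectations.

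Combining the two steps (first choose $T$ large, then let $n\to\infty$) gives $\limsup_n|R_1\mu_n(x)-R_1\mu(x)|\le 2e^{-T}C$ for every $T$. Hence $R_1\mu_n(x)\to R_1\mu(x)$ for q.e.\ $x$. The main obstacle is rigor about exceptional sets. The identity $\mathbb{E}_y[\int e^{-t}\,{\rm d}{\sf A}^n_t]=R_1\mu_n(y)\le\Vert R_1\mu_n\Vert_\infty$ is only known q.e.\ in $y$, yet it is used inside $\mathbb{E}_{X_s}$ along the path. We would handle this by restricting the defining sets $\Lambda$ of the PCAFs to a common properly exceptional set $N$. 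Off $N$ the process never hits $N$, so the sup-bound holds along paths from $x\notin N$. Each ${\sf A}^n$ admits such a version by \cite[Theorem 5.1.3]{MR2778606}.
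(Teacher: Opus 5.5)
Your proposal is correct and follows essentially the same route as the paper. You use integration by parts to get a.s. convergence of $\int_0^T e^{-t}\,{\rm d}{\sf A}^n_t$, a uniform second-moment bound from the uniform Dynkin condition to apply Vitali, and the Markov-property tail bound $e^{-T}\sup_n\Vert R_1\mu_n\Vert_\infty$, combined by first choosing $T$ and then letting $n\to\infty$. The only differences are minor: you bound $\mathbb{E}_x[({\sf A}^n_T)^2]$ rather than the second moment of $\int_0^\infty e^{-t}\,{\rm d}{\sf A}^n_t$, and you spell out the exceptional-set bookkeeping (via (AC) and a common properly exceptional set) that the paper uses implicitly when it bounds $\mathbb{E}_{X_t}[\cdot]$ along paths.
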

	\begin{proof}
		For any $T>0$, from integration by parts  formula, we have
		\[
		\int_0^T e^{-t} {\rm d}{\sf A}^n_t=e^{-T}{\sf A}^n_T +\int_0^T e^{-t}{\sf A}_t^n \ {\rm d}t \quad\text{and}\quad \int_0^T e^{-t} {\rm d}{\sf A}_t=e^{-T}{\sf A}_T +\int_0^T e^{-t}{\sf A}_t \ {\rm d}t. 
		\]
		By assumption, for q.e. $x\in E$,
		\[
		\sup_{0\leq t\leq T}\left| {\sf A}_t^n - {\sf A}_t  \right|\to 0\quad\text{as}\quad n\to\infty\quad \mathbb{P}_x\text{-a.s.}
		\]
		Therefore,
		\[
		\begin{aligned}
			\left|  \int_0^T e^{-t}\ {\rm d}{\sf A}_t^n -   \int_0^T e^{-t}\ {\rm d}{\sf A}_t\right|&=\left| e^{-T}({\sf A}_T^n -{\sf A}_T) +\int_0^T e^{-t} \left({\sf A}_t^n-{\sf A}_t\right) \ {\rm d}t  \right|\\
			&\leq 2\sup_{0\leq t\leq T}\left| {\sf A}_t^n -{\sf A}_t  \right|\\
			&\to 0\quad\text{as}\quad n \to \infty \quad\mathbb{P}_x\text{-a.s.}
		\end{aligned}
		\]
		We obtain, for every fixed $T>0$ and q.e. $x \in E$,
		\[
		\lim_{n \to \infty} \int_0^T e^{-t} {\rm d}{\sf A}^n_t =\int_0^T e^{-t} {\rm d}{\sf A}_t\quad\mathbb{P}_x\text{-a.s.}
		\]
		In fact, that $\displaystyle \mathbb{E}_x\left[  \int_{0}^{\infty}  e^{-t}{\rm d}{\sf A}_t \right]$ and $\displaystyle \mathbb{E}_x\left[  \int_{0}^{\infty}  e^{-t}{\rm d}{\sf A}^n_t \right]$ are quasi continuous modification of the 1-potential $U_1\mu$  for each $n$, respectively. Set $M:=  \sup_n\Vert R_1 \mu_n \Vert_\infty$, we have that
		\[
		\begin{aligned}
			\mathbb{E}_x\left[ \left(\int_0^T e^{-t}\ {\rm d}{\sf A}_t^n\right)^2 \right]&\leq  \mathbb{E}_x\left[  \left(\int_0^\infty e^{-t} \ {\rm d}{\sf A}_t^n\right)^2  \right]\\
			&=2 \mathbb{E}_x\left[ \int_0^\infty e^{-2t}\  \mathbb{E}_{X_t} \left[  \int_0^\infty e^{-s} \ {\rm d}{\sf A}_s^n \right] {\rm d}{\sf A}_t^n \right]\\
			&\leq 2 M^2.
		\end{aligned}
		\]
		Hence $\int_{0}^{T} e^{-t}\ {\rm d}{\sf A}_t^n$  uniformly integrable with respect to $\mathbb{P}_x$. By Vitali’s convergence theorem, we have
		\[
		\lim_{n \to \infty}\mathbb{E}_x\left[ \int_{0}^{T} e^{-t}\ {\rm d}{\sf A}_t^n  \right]=\mathbb{E}_x\left[ \int_{0}^{T} e^{-t}\ {\rm d}{\sf A}_t  \right]\quad \text{for q.e. }x\in E.
		\]
		Consider, for q.e. $x \in E$, from the strong Markov property, we have
		\[
			\mathbb{E}_x\left[ \int_{T}^{\infty} e^{-t}\ {\rm d}{\sf A}_t  \right]=\mathbb{E}_x\left.\left[e^{-T} \mathbb{E}_{X_T}   \left[   \int_{0}^{\infty} e^{-t}\ {\rm d}{\sf A}_t   \right]\ \right|\ \mathscr{F}_T\right]\leq e^{-T} \Vert R_1 \mu \Vert_\infty.
		\]
		Hence, for q.e. $x \in E$,
		\[
		\mathbb{E}_x\left[ \int_{T}^{\infty} e^{-t}\ {\rm d}{\sf A}_t^n  \right]\leq e^{-T} M\quad\text{and}\quad \mathbb{E}_x\left[ \int_{T}^{\infty} e^{-t}\ {\rm d}{\sf A}_t  \right]\leq e^{-T} \Vert R_1 \mu \Vert_\infty.
		\]
		Therefore,   $\mathbb{E}_x\left[ \int_{T}^{\infty} e^{-t}\ {\rm d}{\sf A}_t^n  \right] \to 0$ as $T \to \infty$ for each $n$ and $\mathbb{E}_x\left[ \int_{T}^{\infty} e^{-t}\ {\rm d}{\sf A}_t  \right] \to 0$ as $T \to \infty$ . Then, for any $\varepsilon>0$, there exists $T_0>0$ such that for any $n\in\mathbb{N}$,
		\[
		\mathbb{E}_x\left[ \int_{T_0}^{\infty} e^{-t}\ {\rm d}{\sf A}_t^n  \right] <\frac{\varepsilon}{3}\quad\text{and}\quad\mathbb{E}_x\left[ \int_{T_0}^{\infty} e^{-t}\ {\rm d}{\sf A}_t  \right] <\frac{\varepsilon}{3}.
		\]
		For any $\varepsilon>0$ and q.e. $x \in E$, there exists $N\in\mathbb{N}$ such that for any  $n\geq N$,
		\[
		\left| \mathbb{E}_x\left[ \int_{0}^{T} e^{-t}\ {\rm d}{\sf A}_t  \right] - \mathbb{E}_x\left[ \int_{0}^{T} e^{-t}\ {\rm d}{\sf A}_t^n  \right]\right|<\frac{\varepsilon}{3}.
		\]
		Therefore,
		\[
		\begin{aligned}
			&\left| \mathbb{E}_x\left[ \int_{0}^{\infty} e^{-t}\ {\rm d}{\sf A}_t^n  \right] -\mathbb{E}_x\left[ \int_{0}^{\infty} e^{-t}\ {\rm d}{\sf A}_t  \right]  \right|\\
			\leq&\left| \mathbb{E}_x\left[ \int_{0}^{T_0} e^{-t}\ {\rm d}{\sf A}_t^n  \right] -\mathbb{E}_x\left[ \int_{0}^{T_0} e^{-t}\ {\rm d}{\sf A}_t  \right]  \right|+\mathbb{E}_x\left[ \int_{T_0}^{\infty} e^{-t}\ {\rm d}{\sf A}_t^n\right]+\mathbb{E}_x\left[ \int_{T_0}^{\infty} e^{-t}\ {\rm d}{\sf A}_t\right]\\
			<& \frac{\varepsilon}{3}+\frac{\varepsilon}{3}+\frac{\varepsilon}{3}=\varepsilon.
		\end{aligned}
		\]
		Therefore, $\lim_{n \to \infty} R_1 \mu_n(x)= R_1 \mu(x)$ for q.e. $x \in E$.
	\end{proof}
	
	\begin{remark}\label{rem4.1}
		If, in addition, $\mu_n,\mu \in {\sf S}_{00}$ and the convergence condition in Theorem~\ref{thm4.1} holds for every $x\in E$ (not merely quasi-everywhere), then for the conclusion can be strengthened to 
		\[
		\lim_{n\to\infty} R_1\mu_n(x)=R_1\mu(x)\quad \text{for all}\quad x\in E.
		\]
		Indeed, from (\ref{3.1}), for $\mu\in{\sf S}_{00}$, the function $R_1\mu$ is bounded and, by the Revuz correspondence, equals the expectation $\displaystyle\mathbb{E}_x\left[\int_0^\infty e^{-t}{\rm d}{\sf A}_t\right]$ for every $x\in E$. The pointwise convergence of the PCAFs (for all $x$) and the boundedness of the potentials allow us to apply the dominated convergence theorem to pass to the limit in the expectation, yielding the desired pointwise convergence of $R_1\mu_n$ to $R_1\mu$ for every $x\in E$.
	\end{remark}

	\begin{theorem}\label{thm4.2}
		Let ${\sf A}^n,{\sf A}\in\mathbf{A}_c^+$. Denote by $\mu_n$, $\mu$ their Revuz measures in ${\sf S}$ for each $n$. Assume that $\mu \in {\sf S}_{L\!D}$ and $\{\mu_n\}$ is of uniformly local Dynkin. For any compact set $K \subset E$, if the resolvent kernel $R_1(x,y)$ is continuous on $E\times E$. Then, $R_1 \mu^K,R_1\mu_n^K \in C_b(E)$ and the following conditions are equivalent:
		\begin{itemize}
			\item[{\rm (i)}] $\mu_n$  converges to $\mu$ weakly on $K$.
			\item[{\rm (ii)}] $U_1 \mu^K_n$  converges to $U_1 \mu^K$ strongly in $\mathcal{E}_1$-norm.
			\item[{\rm (iii)}] $\displaystyle \lim_{n\to \infty}\mathbb{E}_x\left[ \sup_{0\leq t\leq T} \left|  (1_K{\sf A}^n)_t - (1_K{\sf A})_t \right|  \right] =0  \text{ for any }T>0\text{ and any }\  x\in E.$
			\item[{\rm (iv)}] $R_1 \mu_n^K$  converges to $R_1 \mu^K$ uniformly on $E$.
		\end{itemize}
	\end{theorem}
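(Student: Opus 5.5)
First, the preliminary claim: $R_1\mu^K, R_1\mu_n^K\in C_b(E)$ follows directly from Proposition~\ref{prop2.6}. Indeed, $\mu\in{\sf S}_{L\!D}$, and each $\mu_n\in{\sf S}_{L\!D}$ because $\{\mu_n\}$ is uniformly local Dynkin. By the proposition preceding Proposition~\ref{prop2.5}, all of $\mu^K,\mu_n^K$ lie in ${\sf S}_{00}$, with $\sup_n\mu_n(K)<\infty$ and $M_K:=\sup_n\Vert R_1\mu_n^K\Vert_\infty<\infty$. I will prove the cycle (i)$\Rightarrow$(iv)$\Rightarrow$(iii)$\Rightarrow$(iv)$\Rightarrow$(ii)$\Rightarrow$(i), with (iii) read as the statement for the PCAFs ${\sf A}^{\mu_n^K}$, ${\sf A}^{\mu^K}$, which is how I interpret $1_K{\sf A}^n$.

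For (i)$\Rightarrow$(iv): for fixed $x$, the function $z\mapsto R_1(x,z)$ is continuous on $K$, so weak convergence on $K$ gives $R_1\mu_n^K(x)\to R_1\mu^K(x)$ pointwise. To upgrade this to uniform convergence I would split $E$ into two parts.

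\emph{Near $K$.} On a compact neighbourhood $L\supset K$, joint uniform continuity of $R_1$ on $L\times K$ gives equicontinuity of $\{R_1\mu_n^K\}$ on $L$, as in Proposition~\ref{prop2.5}. Equicontinuity plus pointwise convergence yields uniform convergence on $L$ (Arzel\`a--Ascoli).

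\emph{Far from $K$.} This is the main obstacle. Since $R_1\mu_n^K$ is $1$-excessive and its measure is supported in $K$, I would use the balayage/hitting identity $R_1\mu_n^K(x)=\mathbb{E}_x[e^{-\sigma_L}R_1\mu_n^K(X_{\sigma_L})]$ for $x\notin L$, where $\sigma_L$ is the hitting time of $L$ (the additive functional only increases on $K$). Continuity of paths is not available in general, but $X_{\sigma_L}\in L$ a.s. on $\{\sigma_L<\infty\}$ because $L$ is closed. Hence
\[
\sup_{x\notin L}\bigl|R_1\mu_n^K(x)-R_1\mu^K(x)\bigr|\le\sup_{L}\bigl|R_1\mu_n^K-R_1\mu^K\bigr|,
\]
which reduces this region to the case near $K$. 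To make the argument rigorous I would write $R_1\mu_n^K(x)=\mathbb{E}_x[\int_0^\infty e^{-t}{\rm d}{\sf A}^{\mu_n^K}_t]$, valid for all $x$ by (\ref{3.1}), note that ${\sf A}^{\mu_n^K}$ does not charge $\{t<\sigma_K\}$, and apply the strong Markov property at $\sigma_K$. Taking $L=K$ then works directly.

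For (iv)$\Rightarrow$(iii) I apply Theorem~\ref{thm3.4} verbatim to $\mu_n^K,\mu^K\in{\sf S}_{00}$.

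For (iii)$\Rightarrow$(iv), $L^1$ convergence of the sup gives convergence of $\mathbb{E}_x[\int_0^Te^{-t}{\rm d}{\sf A}^{n,K}_t]$ by the integration-by-parts bound in Theorem~\ref{thm4.1}. The tail is bounded by $e^{-T}M_K$ uniformly in $x$, so $R_1\mu_n^K\to R_1\mu^K$ pointwise for every $x$ (Remark~\ref{rem4.1}). Equicontinuity on $K$ then gives uniform convergence on $K$, and the hitting argument above extends it to all of $E$.

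For (iv)$\Rightarrow$(ii), since $U_1\mu^K_n\in\mathcal{F}$ with $R_1$ as quasi-continuous version,
\[
\Vert U_1\mu_n^K-U_1\mu^K\Vert_{\mathcal{E}_1}^2=\int (R_1\mu_n^K-R_1\mu^K)\,{\rm d}(\mu_n^K-\mu^K)\le\Vert R_1\mu_n^K-R_1\mu^K\Vert_\infty(\mu_n(K)+\mu(K))\to0.
\]

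For (ii)$\Rightarrow$(i), Proposition~\ref{prop 2.1} gives vague convergence of $\mu_n^K$ to $\mu^K$. To get weak convergence on $K$, I test against $\varphi\in C(K)$, extend it by Tietze to $\tilde\varphi\in C_0(E)$, and approximate $\tilde\varphi$ uniformly by $\mathcal{F}\cap C_0(E)$ functions using regularity. Because $\sup_n\mu_n(K)<\infty$, $\int\tilde\varphi\,{\rm d}\mu_n^K=\int_K\varphi\,{\rm d}\mu_n$. For $v\in\mathcal{F}\cap C_0(E)$ we have $\int v\,{\rm d}\mu_n^K=\mathcal{E}_1(U_1\mu_n^K,v)\to\mathcal{E}_1(U_1\mu^K,v)$, and a $3\varepsilon$ argument concludes.
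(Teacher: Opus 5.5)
Your proposal is correct and follows essentially the paper's own route: pointwise convergence plus the equicontinuity of Proposition~\ref{prop2.5} and Arzel\`a--Ascoli give uniform convergence on $K$, which the strong Markov property at $\sigma_K$ extends to all of $E$; then Theorem~\ref{thm3.4} gives (iv)$\Rightarrow$(iii), the energy identity gives (iv)$\Rightarrow$(ii), Proposition~\ref{prop 2.1} gives (ii)$\Rightarrow$(i), and the integration-by-parts/tail argument gives (iii)$\Rightarrow$(iv). You also spell out two steps the paper leaves implicit: the hitting-time extension from $K$ to $E$ in (iii)$\Rightarrow$(iv), and the Tietze extension needed to pass from vague convergence of $\mu_n^K$ to weak convergence on $K$.
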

	\begin{proof}
		The proof of continuity follows directly from the Proposition~\ref{prop2.6}. ${\rm (i)} \Rightarrow {\rm (iv)}$: Since $R_1 (x,y)$ is continuous on $E\times E$, then for any $x \in E$
		\[
		\lim_{n \to \infty} R_1 \mu_n^K(x)=\int_K R_1(x,y)\ \mu_n({\rm d}y) =\int_K R_1(x,y)\ \mu({\rm d}y) =R_1 \mu^K(x).
		\]
		Form Proposition~\ref{prop2.5}, by Arzel\`a–Ascoli Theorem, we have $R_1 \mu^K_n$ converges to $R_1 \mu^K$ uniformly on $ K$. For each $x \in E$, from the strong Markov property, we have
		\[
		\begin{aligned}
			R_1 \mu^K(x)&=\mathbb{E}_x\left[ \int_{0}^{\infty} e^{-t} \ {\rm d}(1_K {\sf A})_t  \right]\\
			&=\mathbb{E}_x\left[ \int_{0}^{\infty} e^{-t} 1_K(X_t)\ {\rm d}{\sf A}_t  \right]\\
			&=\mathbb{E}_x\left[ \int_{\sigma_K}^{\infty} e^{-t} \ {\rm d}{\sf A}_t  \right]\\
			&=\mathbb{E}_x \left[ e^{-\sigma_K} \mathbb{E}_{X_{\sigma_K}} \left[ \int_0^\infty e^{-t} \ {\rm d}{\sf A}_t   \right]\cdot 1_{\{\sigma_k<\infty\}} \right],
		\end{aligned}
		\]
		where $\sigma_K:=\inf\{t>0\mid X_t \in K \}$ be the first hitting time to $K$. Hence,
		\[
		\begin{aligned}
			\left| R_1 \mu^K(x) - R_1 \mu_n(x) \right|&=\left|\mathbb{E}_x \left[ e^{-\sigma_K} \mathbb{E}_{X_{\sigma_K}} \left[ \int_0^\infty e^{-t} \ {\rm d}{\sf A}_t  - \int_0^\infty e^{-t} \ {\rm d}{\sf A}^n_t  \right]\cdot 1_{\{\sigma_k<\infty\}} \right]\right|\\
			&= \left| \mathbb{E}_x\left[ e^{-\sigma_K} \left( R_1(1_K \mu)(X_{\sigma_K}) - R_1(1_K \mu_n)(X_{\sigma_K})  \right) \cdot 1_{\{\sigma_K<\infty\}} \right]  \right|\\
			&\leq \Vert R_1 \mu^K - R_1 \mu^K_n \Vert_{K,\infty} .
		\end{aligned}
		\]
		Therefore, $R_1 \mu^K_n$ converges to $R_1 \mu^K$ uniformly on $E$.
		\par ${\rm (iv)} \Rightarrow {\rm (ii)}$: Let $M_1:= \mu(K)$ and $M_2:= \sup_n \mu_n(K)$.
		\[
		\begin{aligned}
			&\left| \mathcal{E}_1(U_1 \mu_n^K- U_1 \mu^K, U_1 \mu_n^K- U_1 \mu^K) \right|\\
			\leq&\left| \int_K R_1 \mu_n^K - R_1\mu^K  \ {\rm d}\mu_n \right| +\left| \int_K R_1 \mu_n^K - R_1\mu^K  \ {\rm d}\mu \right|\\
			\leq&(M_1 + M_2)\sup_{x \in K} \left|R_1 \mu_n^K(x)-R_1 \mu^K(x)\right|\\
			\to& 0 \quad \text{as}\quad n \to \infty.
		\end{aligned}
		\]
		Therefore, $U_1 \mu^K_n$ converges to $U_1 \mu^K$ strongly in $\mathcal{E}_1$-norm.
		\par Form Propositon~\ref{prop 2.1} and Theorem~\ref{thm3.4}, ${\rm (ii)}\Rightarrow {\rm (i)}$ and ${\rm (iv)} \Rightarrow {\rm (iii)}$  is obvious. ${\rm (iii)}\Rightarrow {\rm (iv)}$: Let ${\sf B}^n_t:=(1_K{\sf A}^n)_t$ and ${\sf B}_t:=(1_K{\sf A})_t$ be are PCAFs. For any $T>0$, from integration by parts  formula, we have
		\[
		\int_0^T e^{-t} {\rm d}{\sf B}^n_t=e^{-T}{\sf B}^n_T +\int_0^T e^{-t}{\sf B}_t^n \ {\rm d}t \quad\text{and}\quad \int_0^T e^{-t} {\rm d}{\sf B}_t=e^{-T}{\sf B}_T +\int_0^T e^{-t}{\sf B}_t \ {\rm d}t. 
		\]
		By assumption, for all $x\in E$,
		\[
		\begin{aligned}
			\left| \mathbb{E}_x\left[ \int_0^T e^{-t}\ {\rm d}{\sf B}_t^n\right]  -\mathbb{E}_x \left[ \int_0^T e^{-t}\ {\rm d}{\sf B}_t\right] \right|&\leq \mathbb{E}_x\left[ \left| e^{-T}({\sf B}_T^n -{\sf B}_T) +\int_0^T e^{-t} \left({\sf B}_t^n-{\sf B}_t\right) \ {\rm d}t  \right|  \right]\\
			&\leq 2\cdot\mathbb{E}_x \left[ \sup_{0\leq t\leq T}\left| {\sf B}_t^n -{\sf B}_t  \right|\right]\\
			&\to 0 \quad\text{as}\quad n\to\infty.
		\end{aligned}
		\]
		Hence, we have
		\[
		\lim_{n \to \infty}\mathbb{E}_x\left[ \int_{0}^{T} e^{-t}\ {\rm d}{\sf B}_t^n  \right]=\mathbb{E}_x\left[ \int_{0}^{T} e^{-t}\ {\rm d}{\sf B}_t  \right]\quad \text{for q.e. }x\in E.
		\]
		Consider, for all $x \in E$
		\[
		\mathbb{E}_x\left[ \int_{T}^{\infty} e^{-t}\ {\rm d}{\sf B}_t^n  \right]\leq e^{-T} \cdot\sup_{n \in \mathbb{N}}\Vert R_1 \mu_n^K \Vert_\infty\quad\text{and}\quad \mathbb{E}_x\left[ \int_{T}^{\infty} e^{-t}\ {\rm d}{\sf B}_t  \right]\leq e^{-T} \Vert R_1 \mu^K \Vert_\infty.
		\]
		Therefore,   $\mathbb{E}_x\left[ \int_{T}^{\infty} e^{-t}\ {\rm d}{\sf B}_t^n  \right] \to 0$ as $T \to \infty$ for each $n$ and $\mathbb{E}_x\left[ \int_{T}^{\infty} e^{-t}\ {\rm d}{\sf B}_t  \right] \to 0$ as $T \to \infty$ . Then, for any $\varepsilon>0$, there exists $T_0>0$ such that for any $n\in\mathbb{N}$,
		\[
		\mathbb{E}_x\left[ \int_{T_0}^{\infty} e^{-t}\ {\rm d}{\sf B}_t^n  \right] <\frac{\varepsilon}{3}\quad\text{and}\quad\mathbb{E}_x\left[ \int_{T_0}^{\infty} e^{-t}\ {\rm d}{\sf B}_t  \right] <\frac{\varepsilon}{3}.
		\]
		For any $\varepsilon>0$ and all  $x \in E$, there exists $N\in\mathbb{N}$ such that for any  $n\geq N$,
		\[
		\left| \mathbb{E}_x\left[ \int_{0}^{T} e^{-t}\ {\rm d}{\sf B}_t  \right] - \mathbb{E}_x\left[ \int_{0}^{T} e^{-t}\ {\rm d}{\sf B}_t^n  \right]\right|<\frac{\varepsilon}{3}.
		\]
		Therefore,
		\[
		\begin{aligned}
			&\left| \mathbb{E}_x\left[ \int_{0}^{\infty} e^{-t}\ {\rm d}{\sf B}_t^n  \right] -\mathbb{E}_x\left[ \int_{0}^{\infty} e^{-t}\ {\rm d}{\sf B}_t  \right]  \right|\\
			\leq&\left| \mathbb{E}_x\left[ \int_{0}^{T_0} e^{-t}\ {\rm d}{\sf B}_t^n  \right] -\mathbb{E}_x\left[ \int_{0}^{T_0} e^{-t}\ {\rm d}{\sf B}_t  \right]  \right|+\mathbb{E}_x\left[ \int_{T_0}^{\infty} e^{-t}\ {\rm d}{\sf B}_t^n\right]+\mathbb{E}_x\left[ \int_{T_0}^{\infty} e^{-t}\ {\rm d}{\sf B}_t\right]\\
			<& \frac{\varepsilon}{3}+\frac{\varepsilon}{3}+\frac{\varepsilon}{3}=\varepsilon.
		\end{aligned}
		\]
		Therefore, $\lim_{n \to \infty} R_1 \mu_n^K(x)= R_1 \mu^K(x)$ for all $x \in E$. From Proposition~\ref{prop2.5}, by  Arzel\`a–Ascoli Theorem, we can get the conclusion. 
	\end{proof}

	\begin{remark}
		In connection with the equivalence established in Theorem~\ref{thm4.2}, we note the following additional observation concerning almost sure convergence of the full sequence of PCAFs. Suppose that in addition to the assumptions of Theorem~\ref{thm4.2}, the energy condition
		\begin{equation}\label{4.2}
		\sum_{n=1}^{\infty} \mathcal{E}_1\big(U_1\mu_n^K,\; U_1\mu_n^K\big)<\infty
		\end{equation}
		holds for a given compact set $K\subset E$. Then, by virtue of Remark~\ref{rem3.2}, the uniform convergence of $R_1\mu_n$ can be strengthened to
		\[
		\mathbb{P}_x\left(\lim_{n\to\infty} (1_K\mathsf{A}^n)_t = (1_K\mathsf{A})_t\text{ locally uniformly in } t \text{ on } [0,+\infty)\right)=1\quad\text{for all } x\in E.
		\]
		Consequently, under this energy summability condition (\ref{4.2}), form Remark~\ref{rem4.1}, the above almost sure convergence (for all $x\in E$) is also equivalent to the four conditions (i)--(iv) of Theorem~\ref{thm4.2}.\\
		If the summability condition (\ref{4.2}) is not imposed, we can still conclude the existence of a subsequence $\{n_k\}$ such that the almost sure convergence of PCAFs along this subsequence (for all $x\in E$) is equivalent to the corresponding subsequential versions of the four conditions (i)--(iv). In other words, the five conditions (i)--(iv) and the almost sure convergence along the subsequence $\{n_k\}$ are all equivalent on this subsequence.
	\end{remark}

	\begin{theorem}\label{thm4.3}
		Let ${\sf A}^n,{\sf A}\in\mathbf{A}_c^+$. Denote by $\mu_n$, $\mu$ their Revuz measures in ${\sf S}_0$ for each $n$. Assume that $\mu\in {\sf S}^1_{G\!D}$ and $\{\mu_n\}$ is 1-order uniformly  Green-tight measure of Dynkin. If the resolvent kernel $R_1(x,y)$ is continuous on $E\times E$. Then $R_1 \mu,R_1\mu_n \in C_b(E)$ and the following conditions are equivalent:
		\begin{itemize}
			\item[{\rm (i)}] $\mu_n$ converges to $\mu$ vaguely on $E$.
			\item[{\rm (ii)}] $U_1 \mu_n$ converges to $U_1 \mu$ strongly in $\mathcal{E}_1$-norm.
			\item[{\rm (iii)}] $\displaystyle \lim_{n\to \infty}\mathbb{E}_x\left[ \sup_{0\leq t\leq T} \left|  {\sf A}^n_t - {\sf A}_t \right|  \right] =0  \text{ for any }T>0\text{ and q.e. }\  x\in E.$
			\item[{\rm (iv)}] $R_1 \mu_n$ converges to $R_1 \mu$ locally uniformly on $E$.
		\end{itemize}
	\end{theorem}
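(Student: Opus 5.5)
The plan is to reduce to Theorem~\ref{thm4.2} on compact sets and control the tails with the uniform Green-tightness. Continuity of $R_1\mu$ and $R_1\mu_n$ is immediate from Proposition~\ref{prop2.6}, since $\mu\in{\sf S}^1_{D_\infty}$ and each $\mu_n$ is Green-tight. Fix $\varepsilon>0$ and let $K=K(\varepsilon)$ be a compact set with
\[
\sup_n\Vert R_1(1_{K^c}\mu_n)\Vert_\infty<\varepsilon,\qquad \Vert R_1(1_{K^c}\mu)\Vert_\infty<\varepsilon,\qquad \sup_n\int_{K^c}R_1(1_{K^c}\mu_n)\,{\rm d}\mu_n<\varepsilon .
\]
Enlarge $K$ if necessary so that the same bounds hold for $\mu$ (take a union of the two compacts). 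Vague convergence is insensitive to null boundaries only if $\mu(\partial K)=0$. I would therefore choose $K$ as a closed sublevel set of a compactly supported continuous function whose level boundary is $\mu$-null; this is possible for all but countably many levels. With this choice, vague convergence restricted to $K$ gives weak convergence on $K$ in the sense used in Theorem~\ref{thm4.2}(i). This boundary issue, and more generally the passage between "vague on $E$" and "weak on every such $K$", is the step I expect to be the main technical obstacle.

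For (i)$\Rightarrow$(iv): Theorem~\ref{thm4.2} gives $\Vert R_1\mu_n^K-R_1\mu^K\Vert_\infty\to0$. Then
\[
\Vert R_1\mu_n-R_1\mu\Vert_\infty\le\Vert R_1\mu_n^K-R_1\mu^K\Vert_\infty+2\varepsilon,
\]
so the convergence is in fact uniform on $E$, which is stronger than locally uniform.

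For (iv)$\Rightarrow$(ii): expand $\mathcal{E}_1(U_1\mu_n-U_1\mu,U_1\mu_n-U_1\mu)=\int(R_1\mu_n-R_1\mu)\,{\rm d}(\mu_n-\mu)$. Split the integral over $K$ and $K^c$. On $K$ it is bounded by $(\sup_n\mu_n(K)+\mu(K))\sup_K|R_1\mu_n-R_1\mu|\to0$. On $K^c$ one needs $\int_{K^c}R_1\mu_n\,{\rm d}\mu_n$ and the cross terms to be small uniformly in $n$. I would write $R_1\mu_n=R_1\mu_n^K+R_1(1_{K^c}\mu_n)$ and use the second tightness condition for the second piece. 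For the first piece, use the symmetry $\int_{K^c}R_1\mu_n^K\,{\rm d}\mu_n=\int_K R_1(1_{K^c}\mu_n)\,{\rm d}\mu_n\le\varepsilon\sup_n\mu_n(K)$. The cross terms with $\mu$ are handled the same way. For $\int_{K^c}R_1\mu\,{\rm d}\mu$, finiteness of the energy of $\mu\in{\sf S}_0$ plus monotone convergence along an exhausting sequence of compacts makes it small.

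For (ii)$\Rightarrow$(i): this is Proposition~\ref{prop 2.1}.

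For (iv)$\Rightarrow$(iii): since the convergence is actually uniform on $E$, I would apply the estimate of Theorem~\ref{thm3.4}. Its proof only used boundedness of the potentials, finiteness of $\mu_n(E)$ through the energy bound (replace this by $\mathcal{E}_1(\Delta^n,\Delta^n)\to0$ from (ii)), and the martingale decomposition of Lemma~\ref{lem3.1}. Alternatively, apply Theorem~\ref{thm3.3} directly from (ii), which yields (iii) for q.e.\ $x$.

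For (iii)$\Rightarrow$(iv): repeat the argument of Theorem~\ref{thm4.2} (iii)$\Rightarrow$(iv) verbatim with ${\sf A}^n,{\sf A}$ in place of $1_K{\sf A}^n,1_K{\sf A}$. The tail bound $e^{-T}\sup_n\Vert R_1\mu_n\Vert_\infty$ comes from uniform Dynkin, and this yields $R_1\mu_n(x)\to R_1\mu(x)$ q.e. To upgrade to locally uniform convergence, note that $R_1\mu_n$ is equicontinuous on each compact $L$. Indeed, $R_1\mu_n^{K}$ is equicontinuous by the argument of Proposition~\ref{prop2.5}, using continuity on $L\times K$ and $\sup_n\mu_n(K)<\infty$, while the $K^c$-parts are uniformly below $\varepsilon$. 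Pointwise convergence on a dense set (the complement of a capacity-zero set is dense, since $\mathfrak{m}$ has full support and $\mathfrak{m}$-null sets contain no open set) together with equicontinuity and Arzel\`a--Ascoli gives uniform convergence on $L$. This completes the cycle (i)$\Rightarrow$(iv)$\Rightarrow$(ii)$\Rightarrow$(i) and (iv)$\Leftrightarrow$(iii).
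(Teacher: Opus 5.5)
\textbf{The gap is in (iv)$\Rightarrow$(ii).} This is exactly the step the paper handles differently. You bound $\int_{K^c}R_1\mu_n^K\,{\rm d}\mu_n=\int_K R_1(1_{K^c}\mu_n)\,{\rm d}\mu_n$ by $\varepsilon\sup_n\mu_n(K)$ with $K=K(\varepsilon)$, and at the end you must let $\varepsilon\downarrow0$. But $K(\varepsilon)$ grows as $\varepsilon\downarrow0$. The Green-tightness conditions control potentials and energies, not masses, so nothing makes $\varepsilon\sup_n\mu_n(K(\varepsilon))$ small, and in general it is not small for any admissible choice of $K(\varepsilon)$.

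For example, take one-dimensional Brownian motion and $\mu_n=e^{-n}1_{D_n}\,{\rm d}x$, where the $D_n$ are intervals of length $e^{2n}/n$ escaping to infinity. Then $R_1\mu_n\le e^{-n}$ and $\mathcal{E}_1(U_1\mu_n,U_1\mu_n)\le 1/n$. The family is uniformly Dynkin and uniformly Green-tight, and (iv) holds with $\mu=0$. However, if $\mu_n(K^c)=\theta\,\mu_n(E)$, pairing $U_1(1_{K^c}\mu_n)$ with a cutoff equal to $1$ on $D_n$ gives $\int_{K^c}R_1(1_{K^c}\mu_n)\,{\rm d}\mu_n\ge c\,\theta^2/n$. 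Hence every admissible $K(\varepsilon)$ carries at least half of the mass $e^n/n$ of $\mu_n$ for all $n\le c/(4\varepsilon)$, and $\varepsilon\sup_n\mu_n(K(\varepsilon))\to\infty$.

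The same defect affects $\varepsilon\,\mu(K)$. Also, ``the cross terms are handled the same way'' does not cover $\int_{K^c}R_1(1_{K^c}\mu)\,{\rm d}\mu_n$: neither tightness condition controls such a mixed integral, and $\mu_n(K^c)$ may be infinite. Your route from (iv) back to (i), and via Theorem~\ref{thm3.3} to (iii), passes through (ii), so the cycle does not close as written.

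\textbf{The missing idea is the paper's.} First obtain a uniform energy bound from one \emph{fixed} compact $K_0$, where the mass factor is a harmless constant. By symmetry of $R_1$,
$\mathcal{E}_1(U_1\mu_n,U_1\mu_n)=\int_{K_0}R_1\mu_n\,{\rm d}\mu_n+\int_{K_0}R_1(1_{K_0^c}\mu_n)\,{\rm d}\mu_n+\int_{K_0^c}R_1(1_{K_0^c}\mu_n)\,{\rm d}\mu_n\le 2\sup_n\Vert R_1\mu_n\Vert_\infty\sup_n\mu_n(K_0)+1$.
So $M:=\max\{\sup_n\mathcal{E}_1(U_1\mu_n,U_1\mu_n),\mathcal{E}_1(U_1\mu,U_1\mu)\}<\infty$.

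Then estimate every $K^c$-term by Cauchy--Schwarz in $\mathcal{E}_1$ rather than by sup-norms. For $\nu\in\{\mu_n,\mu\}$,
$|\int_{K^c}R_1\nu\,{\rm d}\mu_n|=|\mathcal{E}_1(U_1\nu,U_1\mu_n^{K^c})|\le\sqrt{M}\,\big(\int_{K^c}R_1(1_{K^c}\mu_n)\,{\rm d}\mu_n\big)^{1/2}<\sqrt{M\varepsilon}$.
The same holds with $\mu^{K^c}$ in place of $\mu_n^{K^c}$, using your finite-energy argument for $\int_{K^c}R_1(1_{K^c}\mu)\,{\rm d}\mu$. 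These bounds do not depend on $K$, so letting $n\to\infty$ and then $\varepsilon\to0$ gives (ii).

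\textbf{The rest of your plan is sound.} The superlevel set $K=\{f\ge t\}$ with $\mu(\{f=t\})=0$ correctly converts vague convergence into weak convergence on $K$. Combined with Theorem~\ref{thm4.2} and the uniform tails, it even yields uniform convergence on $E$. This is a different route from the paper, which gets pointwise convergence via a cutoff $\psi\in\mathcal{F}\cap C_0(E)$ and then uses equicontinuity on compacts.

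For (iv)$\Rightarrow$(iii), use Theorem~\ref{thm3.3} after (ii), as the paper does. The Theorem~\ref{thm3.4} option needs $\mu_n(E)<\infty$, and $\mathcal{E}_1(\Delta^n,\Delta^n)\to0$ alone does not control $\int_E R_1(x,y)\,\mu_{\langle\Delta^n\rangle}({\rm d}y)$ pointwise.
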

	\begin{proof}
		${\rm (i)} \Rightarrow {\rm (iv)}$: For any $\varepsilon>0$, there exists a compact set $K_1 \subset E$ such that 
		\[
		\sup_{x \in E} R_1(1_{K_1^c} \mu)(x)<\frac{\varepsilon}{5}\quad\text{and}\quad \sup_{n\in\mathbb{N}}\sup_{x\in E }R_1(1_{K_1^c} \mu_n)(x)<\frac{\varepsilon}{5}.
		\]
		Since $(\mathcal{E},\mathcal{F})$ is regular, there exists $\psi \in \mathcal{F} \cap C_0(E)$ such that $0\leq\psi\leq1$ on $E$ and $\psi \equiv 1$ on $K_1$. Then, for any $x\in E$, we have
		\[
		\begin{aligned}
			\left| R_1 \mu_n(x) - R_1 \mu(x)  \right|\leq& \left| \int_{K_1} R_1(x,y)\mu_n({\rm d}y) -\int_{K_1} R_1(x,y)\mu({\rm d}y)   \right|\\
			&+\left| \int_{K_1^c} R_1(x,y)\mu_n({\rm d}y) -\int_{K_1^c} R_1(x,y)\mu({\rm d}y)   \right|\\
			<& \left| \int_{K_1} \psi(y)R_1(x,y)\mu_n({\rm d}y) -\int_{K_1} \psi(y)R_1(x,y)\mu({\rm d}y)   \right|+\frac{2}{5}\varepsilon\\
			\leq&\frac{2}{5}\varepsilon+\left| \int_E \psi(y)R_1(x,y)\mu_n({\rm d}y) -\int_E \psi(y)R_1(x,y)\mu({\rm d}y)   \right|\\
			&\left| \int_{K_1^c} \psi(y)R_1(x,y)\mu_n({\rm d}y) -\int_{K_1^c} \psi(y)R_1(x,y)\mu({\rm d}y)   \right|\\
			<&\frac{4}{5}\varepsilon +\left| \int_E \psi(y)R_1(x,y)\mu_n({\rm d}y) -\int_E \psi(y)R_1(x,y)\mu({\rm d}y)   \right|.
		\end{aligned}
		\]
		Since $\phi(\cdot)R_1(x,\cdot) \in C_0(E)$ for fixed $x \in E$, then there exists $N\in\mathbb{N}$ such that for any $n \geq N$,
		\[
		\left| \int_E \psi(y)R_1(x,y)\mu_n({\rm d}y) -\int_E \psi(y)R_1(x,y)\mu({\rm d}y)   \right|<\frac{1}{5}\varepsilon.
		\]
		Therefore, $R_1 \mu_n(x)$ converges to $R_1 \mu(x)$ for all $x \in E$ as $n \to \infty$. Similarly, for any $\varepsilon>0$, there exists a compact set $K_2 \subset E$ such that 
		\[
		\sup_{x \in E} R_1(1_{K_2^c} \mu)(x)<\frac{\varepsilon}{3}\quad\text{and}\quad \sup_{n\in\mathbb{N}}\sup_{x\in E }R_1(1_{K_2^c} \mu_n)(x)<\frac{\varepsilon}{3}.
		\]
		For any compact set $F \in E$, since $R_1(x,y)$ is continuous on $E \times E$, then $R_1(x,y)$ is uniformly continuous on $F \times K_2$. There exists $\delta>0$ such that for any $x,y \in F$ satisfying ${\sf d}(x,y)<\delta$, we have 
		\[
		\left| R_1(x,z)- R_1(y,z)  \right|<\frac{\varepsilon}{3\sup_{n} \mu_n(K)}\quad\text{for}\quad z \in K.
		\]
		Then, we obtian that 
		\[
		\begin{aligned}
			&\left| R_1 \mu_n(x)-R_1 \mu_n(y)  \right|\\
			\leq&\left|\int_{K_2} R_1(x,z)\mu_n({\rm d} z)-\int_{K_2} R_1(y,z)\mu_n({\rm d} z)   \right|+\left|\int_{K_2^c} R_1(x,z)\mu_n({\rm d} z)-\int_{K_2^c} R_1(y,z)\mu_n({\rm d} z) \right|\\
			<&\frac{2}{3}\varepsilon+\int_{K_2} \left|R_1(x,z)-R_1(y,z)\right|\mu_n({\rm d}z)<\varepsilon.
		\end{aligned}
		\]
		Then, for any compact set $F$, $R_1 \mu_n(x)$ is uniformly equicontinuous on $F$. Therefore, from Arzel\`a–Ascoli Theorem, $R_1 \mu_n$ converges to $R_1 \mu$ uniformly on any compact set $F$. 
		\par ${\rm (iv)} \Rightarrow {\rm (ii)}$: Since $E$ is locally compacet space, there exists monotonically increasing sequence$E_k$ of compact sets such that $\displaystyle E=\bigcup_{k=1}^\infty E_k$. Then, by monotone convergence Theorem, we have
		\[
		\lim_{k\to \infty}\int_{E_k^c} R_1(1_{E_k^c}\mu)\ {\rm d}\mu=0.
		\]
		Hence, for any $\varepsilon>0$, there exists compact sets $K \subset E$ such that 
		\[
		\int_{K^c} R_1(1_{K^c} \mu) \ {\rm d}\mu<\varepsilon\quad\text{and}\quad\sup_{n \in \mathbb{N}} \int_{K^c}  R_1(1_{K^c} \mu_n)\ {\rm d}\mu_n<\varepsilon.
		\]
		We begin by proving the uniform boundedness of the $\mathcal{E}_1$-energy, 
		\[
		\begin{aligned}
			\mathcal{E}_1(U_1 \mu_n,U_1 \mu_n)&=\int_{K} R_1 \mu_n{\rm d}\mu_n + \int_{K^c} R_1 \mu_n\ {\rm d}\mu_n\\
			&= \int_{K} R_1 \mu_n{\rm d}\mu_n+\int_{K^c}\left(R_1 \mu_n^K+R_1 \mu_n^{K^c}  \right) \ {\rm d}\mu_n\\
			&=\int_{K} R_1 \mu_n{\rm d}\mu_n+\int_{K} R_1 \mu_n^{K^c}\ {\rm d}\mu_n+\int_{K^c} R_1 \mu_n^{K^c}\ {\rm d}\mu_n\\
			&\leq2\cdot\sup_{n \in \mathbb{N}}\Vert R_1 \mu_n \Vert_\infty\cdot \sup_{n \in \mathbb{N}}\mu_n(K) +\varepsilon<\infty.
		\end{aligned}
		\]
		Hence, $M_1:=\sup_{n} \mathcal{E}_1(U_1 \mu_n,U_1 \mu_n)<\infty$. Now, we consider that
		\[
		\begin{aligned}
			\mathcal{E}_1(U_1 \mu_n - U_1 \mu,U_1 \mu_n - U_1 \mu)&=\int_E R_1 \mu_n - R_1 \mu \ {\rm d} 
			\mu_n + \int_E R_1 \mu_n - R_1 \mu \ {\rm d} \mu \\
			&= \int_K R_1 \mu_n - R_1 \mu \ {\rm d} \mu_n+\int_K R_1 \mu_n - R_1 \mu \ {\rm d} \mu \\
			&+\int_{K^c} R_1 \mu_n - R_1 \mu \ {\rm d} \mu_n +\int_{K^c} R_1 \mu_n - R_1 \mu \ {\rm d} \mu\\
			&=:{\rm I_1+I_2+I_3+I_4}.
		\end{aligned}
		\]
		First, we can see that
		\[
		{\rm I_1}\leq \Vert R_1 \mu_n - R_1 \mu \Vert_{K,\infty} \cdot \sup_{n \in \mathbb{N}} \mu_n(K)\to 0\quad\text{as}\quad n \to \infty
		\]
		and
		\[
			{\rm I_2}\leq \Vert R_1 \mu_n - R_1 \mu \Vert_{K,\infty} \cdot  \mu(K)\to 0\quad\text{as}\quad n \to \infty.
		\]
		On the other hand,  we obtain that
		\[
		\begin{aligned}
			|{\rm I_3}|&\leq \left|\int_{K^c} R_1 \mu_n\ {\rm d}\mu_n\right| +\left|\int_{K^c} R_1 \mu\ {\rm d}\mu_n\right|\\
			&=|\mathcal{E}_1(U_1 \mu_n,U_1 \mu_n^{K^c})| +|\mathcal{E}_1 (U_1 \mu,U_1 \mu_n^{K^c})|\\
			&\leq\sqrt{\mathcal{E}_1(U_1 \mu_n,U_1 \mu_n)}\cdot\sqrt{\mathcal{E}_1(U_1 \mu_n^{K^c},U_1 \mu_n^{K^c})}+\sqrt{\mathcal{E}_1(U_1 \mu,U_1 \mu)}\cdot \sqrt{\mathcal{E}_1(U_1 \mu_n^{K^c},U_1 \mu_n^{K^c})}\\
			&<\sqrt{\varepsilon M_1}+\sqrt{\varepsilon M_2},
		\end{aligned}
		\]
		where $M_2:=\mathcal{E}_1(U_1 \mu,U_1 \mu)$. Similarly, we can have $|{\rm I_3}| <\sqrt{\varepsilon M_1}+\sqrt{\varepsilon M_2}$. As $\varepsilon \to 0$, both $I_3,I_4\to 0$. Therefore, by first letting $n \to \infty$ and then passing to the limit $\varepsilon \to 0$, we obtain the desired result.
		\par ${\rm (ii)} \Rightarrow {\rm (iii)}$ is from Theorem~\ref{thm3.3} and ${\rm (ii)} \Rightarrow {\rm (i)}$ is from Proposition~\ref{prop 2.1}. Finally, ${\rm (iii) \Rightarrow (iv)}$: For any $T>0$, from integration by parts  formula, we have
		\[
		\int_0^T e^{-t} {\rm d}{\sf A}^n_t=e^{-T}{\sf A}^n_T +\int_0^T e^{-t}{\sf A}_t^n \ {\rm d}t \quad\text{and}\quad \int_0^T e^{-t} {\rm d}{\sf A}_t=e^{-T}{\sf A}_T +\int_0^T e^{-t}{\sf A}_t \ {\rm d}t. 
		\]
		By assumption, for q.e. $x\in E$,
		\[
		\begin{aligned}
			\left| \mathbb{E}_x\left[ \int_0^T e^{-t}\ {\rm d}{\sf A}_t^n\right]  -\mathbb{E}_x \left[ \int_0^T e^{-t}\ {\rm d}{\sf A}_t\right] \right|&\leq \mathbb{E}_x\left[ \left| e^{-T}({\sf A}_T^n -{\sf A}_T) +\int_0^T e^{-t} \left({\sf A}_t^n-{\sf A}_t\right) \ {\rm d}t  \right|  \right]\\
			&\leq 2\cdot\mathbb{E}_x \left[ \sup_{0\leq t\leq T}\left| {\sf A}_t^n -{\sf A}_t  \right|\right]\\
			&\to 0 \quad\text{as}\quad n\to\infty.
		\end{aligned}
		\]
		Hence, we have
		\[
		\lim_{n \to \infty}\mathbb{E}_x\left[ \int_{0}^{T} e^{-t}\ {\rm d}{\sf A}_t^n  \right]=\mathbb{E}_x\left[ \int_{0}^{T} e^{-t}\ {\rm d}{\sf A}_t  \right]\quad \text{for q.e. }x\in E.
		\]
		Consider, for q.e. $x \in E$
		\[
		\mathbb{E}_x\left[ \int_{T}^{\infty} e^{-t}\ {\rm d}{\sf A}_t^n  \right]\leq e^{-T} \cdot\sup_{n \in \mathbb{N}}\Vert R_1 \mu_n \Vert_\infty\quad\text{and}\quad \mathbb{E}_x\left[ \int_{T}^{\infty} e^{-t}\ {\rm d}{\sf A}_t  \right]\leq e^{-T} \Vert R_1 \mu \Vert_\infty.
		\]
		Therefore,   $\mathbb{E}_x\left[ \int_{T}^{\infty} e^{-t}\ {\rm d}{\sf A}_t^n  \right] \to 0$ as $T \to \infty$ for each $n$ and $\mathbb{E}_x\left[ \int_{T}^{\infty} e^{-t}\ {\rm d}{\sf A}_t  \right] \to 0$ as $T \to \infty$ . Then, for any $\varepsilon>0$, there exists $T_0>0$ such that for any $n\in\mathbb{N}$,
		\[
		\mathbb{E}_x\left[ \int_{T_0}^{\infty} e^{-t}\ {\rm d}{\sf A}_t^n  \right] <\frac{\varepsilon}{3}\quad\text{and}\quad\mathbb{E}_x\left[ \int_{T_0}^{\infty} e^{-t}\ {\rm d}{\sf A}_t  \right] <\frac{\varepsilon}{3}.
		\]
		For any $\varepsilon>0$ and q.e. $x \in E$, there exists $N\in\mathbb{N}$ such that for any  $n\geq N$,
		\[
		\left| \mathbb{E}_x\left[ \int_{0}^{T} e^{-t}\ {\rm d}{\sf A}_t  \right] - \mathbb{E}_x\left[ \int_{0}^{T} e^{-t}\ {\rm d}{\sf A}_t^n  \right]\right|<\frac{\varepsilon}{3}.
		\]
		Therefore,
		\[
		\begin{aligned}
			&\left| \mathbb{E}_x\left[ \int_{0}^{\infty} e^{-t}\ {\rm d}{\sf A}_t^n  \right] -\mathbb{E}_x\left[ \int_{0}^{\infty} e^{-t}\ {\rm d}{\sf A}_t  \right]  \right|\\
			\leq&\left| \mathbb{E}_x\left[ \int_{0}^{T_0} e^{-t}\ {\rm d}{\sf A}_t^n  \right] -\mathbb{E}_x\left[ \int_{0}^{T_0} e^{-t}\ {\rm d}{\sf A}_t  \right]  \right|+\mathbb{E}_x\left[ \int_{T_0}^{\infty} e^{-t}\ {\rm d}{\sf A}_t^n\right]+\mathbb{E}_x\left[ \int_{T_0}^{\infty} e^{-t}\ {\rm d}{\sf A}_t\right]\\
			<& \frac{\varepsilon}{3}+\frac{\varepsilon}{3}+\frac{\varepsilon}{3}=\varepsilon.
		\end{aligned}
		\]
		Therefore, $\lim_{n \to \infty} R_1 \mu_n(x)= R_1 \mu(x)$ for q.e. $x \in E$. Moreover,  $\{ R_1 \mu_n\}$ is equicontinuous on every compact set $K$ (please refer to proofs ${\rm (i)} \Rightarrow{\rm (iv)}$ above). For any $\varepsilon>0$, there exists $N\subset E$ with ${\rm Cap}(N)=0$ and $N\in \mathbb{N}$ such that for any $x\in E\setminus N$ and $n\geq N$, we have
		\[
		\left| R_1\mu_n(x) -R_1\mu(x)  \right|<\varepsilon.
		\] 
		Since the complement of $N$ is dense in $E$ and $R_1\mu^n,R_1\mu \in C_b(E)$, then for any $x \in K$, there exists $x_k \in E\setminus N$ such that $|R_1\mu_n(x)-R_1\mu_n(x_k)|<1/k$ and $|R_1\mu(x)-R_1\mu(x_k)|<1/k$. Hence, for any $x\in K$ and $n\geq N$, we have
		\[
		\begin{aligned}
			\left|R_1\mu_n-R_1\mu(x)\right|&\leq\left| R_1\mu_n(x)-R_1\mu_n(x_k)    \right|+\left| R_1\mu_n(x_k)-R_1\mu(x_k)    \right|+\left|  R_1\mu(x_k)- R_1\mu(x) \right|\\
			&\leq\frac{2}{k}+\varepsilon\to \varepsilon\quad\text{as}\quad k \to \infty.
		\end{aligned}
		\]
		Therefore, $R_1\mu_n$ converges to $R_1\mu$ uniformly on $K$. From Arzel\`a–Ascoli Theorem, we can get the conclusion. 
		\end{proof}

		\begin{remark}
			Similar to Theorem~\ref{thm4.2}, we have the following additional observation concerning almost sure convergence in connection with the equivalence established in Theorem~\ref{thm4.3}. Suppose that in addition to the assumptions of Theorem~\ref{thm4.3}, the energy condition (\ref{4.2}) holds. Then, by virtue of the arguments in Remark~\ref{rem3.1}, $\mathcal{E}_1^{1/2}$-norm strongly convergence of $R_1\mu_n$ can be strengthened to hold for q.e. $x\in E$:
			\[
			\mathbb{P}_x\left(\lim_{n\to\infty} \mathsf{A}^n_t = \mathsf{A}_t\text{ locally uniformly in } t \text{ on } [0,+\infty)\right)=1\quad\text{for q.e. } x\in E.
			\]
			Consequently, under this energy summability condition \ref{4.2}, from Theorem~\ref{thm4.1}, the above almost sure convergence of the full sequence (for q.e. $x\in E$) is also equivalent to the four conditions (i)--(iv) of Theorem~\ref{thm4.3}.
			If the summability condition (\ref{4.2}) is not imposed, we can still conclude the existence of a subsequence $\{n_k\}$ such that the almost sure convergence of PCAFs along this subsequence (for q.e. $x\in E$) is equivalent to the corresponding subsequential versions of the four conditions (i)--(iv). In other words, the five conditions, namely (i)--(iv) and the almost sure convergence along the subsequence $\{n_k\}$ are all equivalent on this subsequence.
		\end{remark}

		\section*{Acknowledgments}
		The author would like to express sincere gratitude to Professor Kazuhiro Kuwae for his guidance, and to Professor Kaneharu Tsuchida and Professor Takumu Ooi for their valuable discussions.

	 \bibliographystyle{amsplain}
	\bibliography{ref.bib}
\end{document}